\documentclass[11pt,reqno]{amsart}

\usepackage{mathrsfs}
\usepackage{amsfonts}
\usepackage{amssymb}
\usepackage{amsthm}
\usepackage{amsmath}
\usepackage{latexsym}
\usepackage{tikz}
\usepackage{geometry}

\newtheorem{theorem}{Theorem}[section]
\newtheorem{proposition}[theorem]{Proposition}
\newtheorem{lemma}[theorem]{Lemma}

\newtheorem{remark}[theorem]{Remark}

\makeatletter \@addtoreset{equation}{section} \makeatother


\geometry{left=3.5cm,right=3.5cm,top=3.5cm,bottom=3.5cm}

\begin{document}

\title[Global Classical Solutions of  3D MHD System on Periodic Boxes]
{ Global Classical Solutions of 3D Viscous MHD System without magnetic diffusion on Periodic Boxes}

\author[R. Pan]{Ronghua Pan}\address{School of Mathematics,
Georgia Institute of Technology, Atlanta, GA 30332 USA}
\email{\tt panrh@math.gatech.edu}
\author[Y. Zhou]{Yi Zhou}\address{School of Mathematical Sciences, Fudan University, Shanghai 200433, P.R.China}
\email{\tt yizhou@fudan.edu.cn}
\author[Y. Zhu]{Yi Zhu*} \address{Department of Mathematics, East China University of Science and Technology, Shanghai 200237, P.R.China}
\email{\tt zhuyim@ecust.edu.cn}

\footnotetext[1]{Corresponding author.}

\date{}
\subjclass[2010]{35Q35, 76D03, 76W05}
\keywords{ MHD System, Eulerian Coordinate, Global Regularity.}

\begin{abstract}
In this paper, we study the global existence of classical solutions to the three dimensional incompressible viscous magneto-hydrodynamical system without magnetic diffusion on periodic boxes, i.e., with periodic boundary conditions. We work in Eulerian coordinate and employ a time-weighted energy estimate to prove the global
 existence result, under the assumptions that the initial magnetic field is close enough to an equilibrium state and the initial data have some symmetries.
\end{abstract}


\maketitle

\section{Introduction}
The equations of viscous magnetohydrodynamics (MHD) model the motion of electrically conducting fluids interacting with magnetic fields. When the fluids are strongly collisional plasmas, or the resistivity due to collisions is extremely small, the diffusion in magnetic field is often neglected \cite{HC, TGCDP, LDEM}. When magnetic diffusion is missing, it is extremely interesting to understand whether the fluid viscosity only could prevent singularity development from small smooth initial data in three dimensional physical space, in view of the strongly nonlinear coupling between fluids and magnetic field. Mathematically, it is also close in structure to the model of dynamics of certain complex fluids,  including hydrodynamics of viscoelastic fluids, c.f. \cite{FL,FLCLPZ,FLLX,FLPZ,FLTZ}. To this purpose, we investigate the global existence of smooth solutions to the following initial boundary value problem
\begin{equation}\label{eq1.1}
\begin{cases}
B_t + u \cdot \nabla B = B\cdot \nabla u,\\
u_t + u \cdot \nabla u - \Delta u + \nabla p= B\cdot \nabla B,\\
\nabla \cdot u = \nabla \cdot B = 0,\\
u(0,x)= u_0(x), \quad B(0,x) = B_0(x),
\end{cases}
\end{equation}
with periodic boundary conditions
\begin{equation}\label{eq0.2}
x\in [-\pi, \pi]^3 = \mathbb{T}^3.
\end{equation}
here $B=(B_1,B_2,B_3)$ denotes the magnetic field, $u=(u_1,u_2,u_3)$ the fluid velocity, $p=q+\frac{1}{2}|B|^2$ where $q$ denotes the scalar pressure of the fluid.

Extensively impressive progresses had been achieved in the past decades for MHD systems. Indeed, according to the level of dissipations, there are roughly three different layers of models: inviscid and non-resistive (no viscosity, no magnetic diffusion, hence no dissipation); viscous and resistive (fully dissipative in fluids and in magnetic field); and partially dissipative (only viscosity or magnetic diffusion presents). On one hand, it is natural to expect global existence of classical solutions for viscous and resistive MHD at least for small initial data, this has been confirmed in classical papers by Duvaut and Lions \cite{DL} and by Sermange and Temam \cite{ST}. In 2008, Abidi and Paicu \cite{AP} generalized these results to the inhomogeneous MHD system with initial data in the so-called critical spaces. More recently, Cao and Wu \cite{CW}, also see \cite{CRW}, proved the global well-posedness for any data in $H^2(\mathbb{R}^2)$ with mixed partial viscosity and magnetic diffusion in two dimensional MHD system. On the other hand, it is somehow striking that Bardos, Sulem and Sulem \cite{CBCS} proved that  the inviscid and non-resistive MHD system also admits a unique global classical solution when the initial data is near a nontrivial equilibrium. It seems that purely dispersion and some coupling of nonlinearity between fluids and magnetic field are sufficient to maintain the regularity from initial data. Very recently, the vanishing dissipation limit from
fully dissipative MHD system to inviscid and non-resistive MHD system has been justified by \cite{HXY, CL, WZ} under some structural conditions between viscosity and magnetic diffusion coefficients. Therefore, it is not surprise why the remaining case, partially dissipative MHD, attracts a lot of attentions in the recent years. As documented in \cite{CW, LZ}, the inviscid and resistive 2D MHD system admits a global $H^1$ weak solution, but the uniqueness of such solution with higher order regularity is still not known yet.

In the case of our consideration, namely the incompressible MHD system with positive viscosity and zero resistivity, it is still an open problem whether or not there exists global classical solutions even in two dimensional space for generic smooth initial data. The main difficulty of studying these MHD systems lies in the non-resistivity of the magnetic equation. Some interesting results have been obtained for small smooth
solutions. For a closely related model in 3D, the global well-posedness was established by Lin and Zhang \cite{FLPZ}, and a simpler proof was offered by Lin and Zhang \cite{FLTZ}. With certain admissible condition for initial data, Lin, Xu and Zhang \cite{FLLX} established the global existence in 2D for initial data close to an nontrivial equilibrium state and the 3D case was proved by Xu and Zhang \cite{LXPZ}. Later, Ren, Wu, Xiang and Zhang \cite{REN} removed the restriction in 2D case (see Zhang \cite{TZ} for a simplified proof). We also refer to another proof for 2D incompressible case by Hu and Lin \cite{HU}. Hu \cite{HU2} further established some results for 2D compressible MHD system.
Very recently, under Lagrangian coordinate system, Abidi and Zhang \cite{HAPZ} proved the global well-posedness for 3D MHD system without the admissible restriction. For somehow large data, Lei \cite{Lei} proved the global regularity of some axially symmetric solutions in three space dimensional MHD system. While all results down the line are about Cauchy problem, an initial boundary value problem for 2D case under Eulerian coordinate in a strip domain $\mathbb{R}\times(0,1)$ was done by Ren, Xiang and Zhang \cite{RXZ2} recently. And the 3D case on $\mathbb{R}^2\times(0,1)$ for both compressible and incompressible fluids was considered by Tan and Wang \cite{TW} under Lagrangian coordinate. In the three dimensional case, these inspiring results, along with many innovative methods and estimates, made full use of partial dissipation offered by viscosity, dispersion of waves on unbounded domain and the structure of Lagrangian formulation (which contains one time derivative already and helps capture the weak dissipation). It is then natural to explore the following two questions. Is it possible to establish global existence of small smooth solutions on bounded domain, where the dispersion effect is limited? Can one work with Eulerian coordinate where the system takes a simpler form with the cost of the loss of one time derivative, thus the loss of possible time decay?

Our main aim in this paper is to offer answers to these questions. Indeed, as one step in this direction, we will establish the global existence of small smooth solutions to the 3D incompressible viscous magneto-hydrodynamical system without resistivity on periodic boxes, under the assumptions that the initial magnetic field is close enough to an equilibrium state and the initial data have some symmetry structure. We will also avoid the use of Lagrangian formulation. The advantage of the Eulerian coordinate is that, if successful, it would be neat and simple.

To fix the idea, we adopt the following notations
\begin{equation}\nonumber
x_h=(x_1,x_2), \quad \nabla_h=(\partial_1, \partial_2), \quad B_h=(B_1, B_2)^{\top},
\end{equation}
and similar notations for other quantities without causing further confusions.

We assume that
\begin{equation}\label{eq0.3}
\begin{split}
u_{0,h}(x), \quad B_{0,3}(x)\quad &\text{are even periodic with respect to } x_3,\\
u_{0,3}(x),\quad  B_{0,h}(x) \quad &\text{are odd periodic with respect to } x_3,
\end{split}
\end{equation}
moreover
\begin{equation}\label{eq0.4}
\int_{\mathbb{T}^3} u_0 \;dx = 0, \quad \int_{\mathbb{T}^3} B_{0,3} \;dx = \alpha \neq 0.
\end{equation}

Our main result can be stated as follows.
\begin{theorem}\label{thm1}
Consider the 3D MHD system \eqref{eq1.1}-\eqref{eq0.2} with initial data satisfies the conditions \eqref{eq0.3}-\eqref{eq0.4}. Then, there exists a small constant $\varepsilon > 0$, only depending  on $\alpha$ such that the system \eqref{eq1.1} admits a
global smooth solution provided that
\begin{equation}\nonumber
\|u_0\|_{H^{2s+1}} + \|\nabla B_0\|_{H^{2s}}\leq \varepsilon,
\end{equation}
where $s \geq 5$ is an integer.
\end{theorem}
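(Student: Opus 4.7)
The plan is to linearize around the equilibrium $B=\alpha e_3$ and exploit the wave-type structure that this linearization induces. Setting $b=B-\alpha e_3$, the system becomes
\[
\begin{cases}
b_t+u\cdot\nabla b-\alpha\partial_3 u=b\cdot\nabla u,\\
u_t+u\cdot\nabla u-\Delta u-\alpha\partial_3 b+\nabla p=b\cdot\nabla b,\\
\nabla\cdot u=\nabla\cdot b=0,
\end{cases}
\]
whose linear part is a damped anisotropic wave system: differentiating the $u$ equation in $t$ and substituting gives $u_{tt}-\Delta u_t-\alpha^2\partial_3^2 u=\text{NL}$, i.e.\ damped wave propagation in the $x_3$ direction. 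Before anything else I would check that the symmetry conditions \eqref{eq0.3} and the mean conditions \eqref{eq0.4} propagate in time: the structure of the equations commutes with the reflection $x_3\mapsto -x_3$ (with the appropriate parity on each component of $u,b$), and divergence-free transport conservation together with $\int u\,dx=0$ initially keeps the means fixed. This matters because it provides a Poincar\'e inequality for both $u$ and $b$ on $\mathbb{T}^3$, which is indispensable on the bounded domain where no dispersion is available.

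Next I would set up a hierarchy of energies $E_k(t)=\sum_{|\beta|\le k}(\|\partial^\beta u\|_{L^2}^2+\|\partial^\beta b\|_{L^2}^2)$ for $k$ ranging up to $2s+1$, together with a corresponding viscous dissipation $D_k(t)=\sum_{|\beta|\le k}\|\nabla\partial^\beta u\|_{L^2}^2$. The basic energy identity is clean because the linear coupling $-\alpha\partial_3 u$, $-\alpha\partial_3 b$ is skew. The crucial additional ingredient is a cross functional
\[
\mathcal C_k(t)=\sum_{|\beta|\le k}\int_{\mathbb{T}^3}\partial^\beta\partial_3 u\cdot \partial^\beta b\,dx,
\]
whose time derivative, after using the equations and integrating by parts, produces a term $-\alpha\|\partial_3\partial^\beta b\|_{L^2}^2$ together with terms bounded by $\|\nabla\partial^\beta u\|_{L^2}^2$, $\|\Delta\partial^\beta u\|_{L^2}^2$ and nonlinear remainders. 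Combining $E_k+\delta\,\mathcal C_k$ for small $\delta$ yields an effective dissipation controlling both $\nabla u$ and $\partial_3 b$ at order $k$, at the cost of using one higher derivative of $u$ for the dissipation.

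Because the magnetic dissipation is only in the $x_3$ direction, a pure energy estimate cannot close: the nonlinearity $b\cdot\nabla u$ in the $b$ equation forces us to pay a derivative we do not have in $b$. I would remedy this by introducing time weights. Inspired by the damped-wave heuristic, I would prove tiered estimates of the form
\[
\frac{d}{dt}\bigl((1+t)^{N_k}(E_k+\delta\mathcal C_k)\bigr)+(1+t)^{N_k}\bigl(D_k+\|\partial_3 b\|_{H^k}^2\bigr)\le(1+t)^{N_k-1}E_{k}+\text{NL}_k,
\]
with $N_k$ chosen to strictly increase as $k$ decreases, so that the lower-order (heavily weighted) dissipation absorbs the slack left by the higher-order (less weighted) estimates. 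The Poincar\'e inequality and the symmetry (which kills awkward boundary-like contributions from $\partial_3$ integration by parts on odd/even components) are used repeatedly to convert $\partial_3 b$ control into full $b$ control modulo lower-order terms. The role of $s\ge 5$ and the spread $H^{2s+1}/H^{2s}$ is precisely to accommodate this loss-of-derivative ladder and still embed into $L^\infty$.

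The hard part, as always in this circle of problems, is the top-order nonlinear estimate of $\int \partial^\beta(b\cdot\nabla u)\cdot \partial^\beta b$ with $|\beta|=2s$: the highest-derivative piece is $\int(b\cdot\nabla)\partial^\beta u\cdot\partial^\beta b$, which we cannot integrate by parts on $\partial^\beta u$ without losing a derivative. I would handle it by using the $b$ equation to replace $\partial^\beta u$'s tangential derivative by $\alpha^{-1}\partial_t\partial^{\beta'}b$ plus lower-order terms, converting the dangerous term into a time derivative that integrates as a boundary term plus quantities controlled by $\|\partial_3 b\|_{H^{2s}}^2$ (which we have as dissipation) and by $D_{2s+1}$. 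Once this single estimate closes with the time weights, a standard continuity/bootstrap argument from the local solution, under the smallness of $\varepsilon$, delivers global existence and completes the proof of Theorem~\ref{thm1}.
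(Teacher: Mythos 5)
Your overall architecture coincides with the paper's: perturb around the equilibrium $e_3$, verify that the parity in $x_3$ and the zero means propagate, build a ladder of time-weighted energies whose weights increase as the derivative order decreases, extract dissipation for $\partial_3 b$ from a cross functional of the form $\int\nabla^{2s}u\cdot\nabla^{2s}\partial_3 b\,dx$ (your $\mathcal C_k$ is the same object up to integration by parts), and close by a continuity argument. However, there is a concrete gap in your nonlinear analysis, in two respects. First, the term you single out as ``the hard part,'' namely $\int(b\cdot\nabla)\partial^\beta u\cdot\partial^\beta b\,dx$ at top order, is in fact harmless: since you define $E_k$ as the \emph{sum} of the $u$- and $b$-energies, this term cancels exactly against its partner $\int(b\cdot\nabla)\partial^\beta b\cdot\partial^\beta u\,dx$ coming from $b\cdot\nabla b$ in the momentum equation, by integration by parts and $\nabla\cdot b=0$ (this is the identity $I_1=J_2=N_2=0$ in the paper). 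Your proposed remedy for it --- replacing a \emph{tangential} derivative of $u$ by $\alpha^{-1}\partial_t b$ via the induction equation --- does not parse anyway, since that equation only expresses $\partial_3 u$ (not $\nabla_h u$) in terms of $\partial_t b$ plus quadratic terms.

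Second, the genuinely dangerous term is left unaddressed: it is $b\cdot\nabla b$ from the momentum equation tested against $\nabla^{2s}\partial_3 b$ in the cross-functional estimate (and its analogues at lower weighted orders), where after splitting $b\cdot\nabla b=b_h\cdot\nabla_h b+b_3\,\partial_3 b$ the piece $b_h\cdot\nabla_h b$ contains \emph{no} factor carrying a $\partial_3$ and hence no dissipated quantity. The mechanism that saves it is a \emph{fiberwise} Poincar\'e inequality in the $x_3$ variable: because $b_h$ is odd periodic in $x_3$, its average over each vertical fiber $\{x_h\}\times[-\pi,\pi]$ vanishes, whence $\|b_h\|_{H^k}\lesssim\|\partial_3 b_h\|_{H^k}$, turning both factors into dissipated ones. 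You do invoke ``converting $\partial_3 b$ control into full $b$ control,'' but you attribute the Poincar\'e inequality to the global mean-zero condition $\int_{\mathbb{T}^3}b\,dx=0$, which is \emph{not} sufficient for the directional version needed here; it is precisely the parity assumption \eqref{eq0.3} that supplies the fiberwise mean-zero, and only for the component $b_h$ that actually appears in the bad term. Without identifying this term and this mechanism, the weighted ladder you describe cannot be closed, since a derivative is irretrievably lost on $b$ in the horizontal directions.
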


\begin{remark}
  Our methods can be used to other related models. Similar result for the compressible system will be presented in a forthcoming paper.
\end{remark}

Without loss of generality, we assume $\alpha = (2\pi)^3$, and following Lin and Zhang \cite{FLPZ}, we let
$$ B_0=b_0 + e_3,$$
where $e_3=(0,0,1)^{\top}$.
Hence, we have
\begin{equation}\label{eq1.02}
\int_{\mathbb{T}^3} b_0 \;dx = \int_{\mathbb{T}^3} u_0 \;dx = 0.
\end{equation}

Set $B=b+e_3$, we get the system of pair $(u, b)$ as follows
\begin{equation}\label{eq1.2}
\begin{cases}
b_t + u \cdot \nabla b = b\cdot \nabla u + \partial_3 u,\\
u_t + u \cdot \nabla u - \Delta u + \nabla p= b\cdot \nabla b + \partial_3 b,\\
\nabla \cdot u = \nabla \cdot b = 0,
\end{cases}
\end{equation}
with initial data
$$u(0,x)= u_0(x), \quad b(0,x) = b_0(x).$$
and the property of initial data \eqref{eq0.3} will be hold, i.e.,
\begin{equation}\label{eq1.3}
\begin{split}
u_h(0,x), \quad b_3(0,x)\quad &\text{are even periodic with respect to } x_3,\\
u_3(0,x),\quad  b_h(0,x) \quad &\text{are odd periodic with respect to } x_3.
\end{split}
\end{equation}
Also, by the periodic boundary conditions \eqref{eq1.02} and system \eqref{eq1.2}, we have
\begin{equation}\label{eq1.4}
\int_{\mathbb{T}^3} b \;dx = \int_{\mathbb{T}^3} u \;dx = 0.
\end{equation}

\begin{remark}
  The property \eqref{eq1.3} will hold in the time evolution. Indeed, we can define $\bar u(t,x), \bar b(t,x) $ as follows
  \begin{equation}\nonumber
\begin{split}
\bar u_h(t, x_h, x_3) =& u_h(t,x_h, - x_3), \quad \bar u_3(t,x_h, x_3) = - u(t,x_h,- x_3), \\
\bar b_h(t, x_h, x_3) =& - b_h(t,x_h, -x_3), \quad \bar b_3(t,x_h, x_3) = b_3 (t, x_h, -x_3).
\end{split}
\end{equation}
  Then, quantities $\bar u, \bar b$ satisfy the same system \eqref{eq1.2} like $u, b$ and also have the same initial data. Hence, by the uniqueness of classical solution, we obtain $\bar b (t,x) = b(t,x)$ and $\bar u(t,x) = u(t,x)$. Therefore, we see the property \eqref{eq1.3} persist.
\end{remark}

Although the property \eqref{eq1.3} (from \eqref{eq0.3} ) could be realized physically initially and is preserved in time evolution as explained in the previous remark, its physical interpretation is not quite clear. It identifies a significant class of initial data on periodic box admitting global classical solution to \eqref{eq1.2} near a nontrivial magnetic equilibrium. Mathematically, it helps in analysis  allow us to use Poincar\'{e} inequality
(Proposition \ref{prop}) for some crucial terms in the estimates. On the other hand, it is also needed to rule out an extremely unclear situation related to the global regularity of 2D MHD without magnetic diffusion when the initial data is a small perturbation near the {\bf trivial} equilibrium, which is a very difficult problem.  To help the readers understand the situation, let's choose the following special class of initial data
\begin{equation}\label{specialdata} B_0(x)=(B_0^h(x_h), 1), u_0(x)=(u_0^h(x_h),0), \nabla_h \cdot B_0^h = \nabla_h \cdot u_0^h = 0, \end{equation}
which reduces the original system \eqref{eq1.1} into the following 2D problem
\begin{equation}\label{MHD2D}
\begin{cases}
B^h_t + u^h \cdot \nabla_h B^h = B^h \cdot \nabla_h u^h,\\
u^h_t + u^h \cdot \nabla_h u^h - \Delta_h u^h + \nabla_h P = B^h \cdot \nabla_h B^h,\\
\nabla_h \cdot B^h = \nabla_h \cdot u^h = 0,\\
B^h(0, x_h)=B_0^h(x_h), u^h(0,x_h)=u_0^h(x_h).
\end{cases}
\end{equation}
with initial data $(B_0^h(x_h), u_0^h(x_h)) $ being a perturbation near a trivial equilibrium. This, however, is still a challenging open problem. Furthermore, we note that if
$(B^h, u^h)(t, x_h)$ is a classical solution of \eqref{MHD2D}, then $B(t, x)=(B^h(t, x_h), 1), u(t, x)=(u^h(t, x_h), 0)$ is the corresponding solution of
 \eqref{eq1.1} with initial data \eqref{specialdata}.  When the problem is considered in the whole space, by requiring finite energy in $\mathbb{R}^3$, one finds
$B_0^h(x_h)=0= u_0^h(x_h) $, avoiding the complex situation successfully. However, on a periodic bounded domain, finite energy condition is not sufficient to show the solution of \eqref{MHD2D} is trivial. To avoid the unclear situation mentioned before, some additional conditions are needed. In this paper, we impose \eqref{eq1.3} (from \eqref{eq0.3}) to
ensure that $B_0^h(x_h)=0= u_0^h(x_h) $, and thus system \eqref{MHD2D} has only trivial solution. It would be interesting to explore other conditions for this purpose.

  In order to prove Theorem \ref{thm1}, we only need to consider the system \eqref{eq1.2} instead.

  In this paper, we have to face the difficulties from bounded domain and the loss of weak dissipation without using Lagrangian formulation.
  One of the major differences in analysis between the whole space and the bounded domain is the character of dissipation. For the whole space, although the system contains only the viscosity, it is possible to recover dissipative structure for all components of $u$ and $b$, in addition to the advantage of wave dispersion. For the bounded domain, however, it is extremely difficult to recover dissipative structure for all components of
  $u$ and $b$. Indeed, even with the help of  condition \eqref{eq1.3} and  Poincar\'{e} inequality
(Proposition \ref{prop}), we could not derive dissipation for $b_3$. We emphasize that the analysis of the whole space case is quite complicated and exhibits very different features comparing to our case here. They are different in nature, also difficult in different aspects.
These challenges will be overcome through a carefully designed weighted energy method with the help of some observations to the structure of the system. One of the major observations is that the time derivative of
$b$ is essentially quadratic terms plus a derivative term in the good direction $x_3$ where dissipation kicks in. Another observation is that although the bounded domain pushes us away from possible dispersion of
waves, it does compensate us with Poincar\'{e} inequality. However, the high space dimensions, the lack of
magnetic diffusion, and the strongly coupled nonlinearity of the problem, make the mathematical analysis very challenging. Even with our carefully designed time-weighted energies, there are still many dedicated technical issues. One of  our main obstacles is to derive the time dissipative estimate to the term $b\cdot\nabla b$ which behaves most wildly in the system. Writing $b\cdot\nabla b = b_3 \cdot \partial_3 b + b_h \cdot \partial_h b$, we notice that $b_3\cdot \partial_3 b$ contains one good quantity $\partial_3 b$ can be estimated relatively easily due to dissipation in $x_3$ direction. Hence, we focus on the term $b_h \cdot \nabla_h b$ containing two bad terms. To overcome this difficulty, we make full use of the condition \eqref{eq1.3} and Poincar\'{e} inequality in $x_3$ direction. Thus the norm of $b_h$ can be controlled by the norm of $\partial_3b_h$.  This specific choice of estimate avoids the presence of interaction between two wild quantities. Such an idea actually origins from \emph{null condition} in the theory of wave equations. However, we still have to come across other difficulties in the estimate process. For example, we can not achieve the uniform bound of all higher order norms that we wanted. Instead, we turn to control the growth of such norms by the energy frame we construct in the next section. More detailed decay estimates will also be presented in Section 2.

\section{Energy estimate and the proof of  main result}

\subsection{Preliminary}
In this subsection, we first introduce a useful proposition related to Poincar\'{e} inequality  which plays an important role in our proof to the main theorem of this paper.
\begin{proposition}\label{prop}
For any function $f(x) \in H^{k+1}(\mathbb{T}^3)$, $k \in \mathbb{N}$ satisfying the following condition
\begin{equation}\label{eqp1}
  \frac{1}{2\pi}\int_{-\pi}^{\pi} f(x_h,x_3)\ dx_3 =0, \qquad \forall \; x_h \in \mathbb{T}^2,
\end{equation}
it holds that
\begin{equation}\nonumber
 \|f\|_{H^k (\mathbb{T}^3)} \lesssim  \|\partial_3f\|_{H^k (\mathbb{T}^3)} .
\end{equation}
\end{proposition}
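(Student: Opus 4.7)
The plan is to reduce the claim to one-dimensional Poincaré applied on each vertical slice, or equivalently, to read it off directly from the Fourier series. Both viewpoints are short; I will describe the Fourier route as the primary one since it handles all orders $k$ uniformly, and indicate the slice approach as a backup.

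First I would expand $f$ as $f(x)=\sum_{n\in\mathbb{Z}^3}\hat f(n)\,e^{i n\cdot x}$. The hypothesis \eqref{eqp1} says that for every $x_h\in\mathbb{T}^2$ the $x_3$-average vanishes, which upon computing $(2\pi)^{-1}\int_{-\pi}^{\pi}f\,dx_3=\sum_{n_h\in\mathbb{Z}^2}\hat f(n_h,0)e^{i n_h\cdot x_h}$ forces $\hat f(n_h,0)=0$ for every $n_h$. Hence every non-vanishing Fourier mode satisfies $n_3\in\mathbb{Z}\setminus\{0\}$, so $|n_3|\ge 1$ and therefore $n_3^2\ge 1$ on the support of $\hat f$.

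Then the estimate follows from Plancherel: since $(1+|n|^2)^k\le (1+|n|^2)^k n_3^2$ on $\mathrm{supp}\,\hat f$,
\begin{equation}
\nonumber
\|f\|_{H^k}^2 = \sum_{n}(1+|n|^2)^k|\hat f(n)|^2
\le \sum_{n} n_3^2(1+|n|^2)^k|\hat f(n)|^2 = \|\partial_3 f\|_{H^k}^2,
\end{equation}
which gives the claimed bound with constant $1$ in fact. (The Sobolev norm comparison between $\sum(1+|n|^2)^k|\hat f(n)|^2$ and $\sum_{|\alpha|\le k}\|\partial^\alpha f\|_{L^2}^2$ introduces only a dimensional constant, hence the $\lesssim$.)

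If one prefers to avoid Fourier series, the same result follows by applying the one-dimensional Poincaré inequality $\|g\|_{L^2(-\pi,\pi)}\lesssim\|g'\|_{L^2(-\pi,\pi)}$ slice-by-slice to $g(x_3)=\partial^{\alpha}f(x_h,x_3)$ for each multi-index $\alpha$ with $|\alpha|\le k$: if $\alpha$ contains at least one $\partial_3$, the derivative is already of the form $\partial_3(\text{something in }H^k)$, and if $\alpha$ involves only horizontal derivatives, then $\partial^{\alpha}f$ inherits the zero $x_3$-mean property from \eqref{eqp1}, so Poincaré in $x_3$ applies and the bound is integrated over $x_h\in\mathbb{T}^2$. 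There is no real obstacle; the only thing to be careful about is confirming that the hypothesis is fiber-wise zero mean (not merely the global integral vanishing), which is exactly what allows the Fourier mode $n_3=0$ to be eliminated.
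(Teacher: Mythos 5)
Your proposal is correct. Your primary (Fourier) argument is a genuinely different route from the paper's: the paper works entirely on the physical side, checking for each multi-index $\alpha$ with $|\alpha|\le k$ that the $x_3$-average of $\partial^{\alpha}f$ over $[-\pi,\pi]$ vanishes --- by differentiating the hypothesis \eqref{eqp1} in $x_h$ when $\alpha_3=0$, and by periodicity (fundamental theorem of calculus) when $\alpha_3>0$ --- and then applies the one-dimensional Poincar\'e inequality fiber by fiber before integrating over $x_h$. Your Fourier version instead observes that \eqref{eqp1} kills every mode with $n_3=0$, so $n_3^2\ge 1$ on the support of $\hat f$ and Plancherel gives the bound in one line, uniformly in $k$ and with an explicit constant for the multiplier norm; this is cleaner and makes transparent exactly which hypothesis is being used (fiber-wise zero mean, not just $\int_{\mathbb{T}^3}f\,dx=0$), a point you rightly flag. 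Your backup slice-by-slice sketch is essentially the paper's proof, with one small variation: for $\alpha_3\ge 1$ you note that $\partial^{\alpha}f=\partial^{\alpha'}\partial_3 f$ with $|\alpha'|\le k-1$ and so the term is controlled by $\|\partial_3 f\|_{H^k}$ directly, whereas the paper runs Poincar\'e on those terms as well; both are valid, and yours avoids an unnecessary application of the inequality. No gaps.
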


\begin{proof}

First, we can write
\begin{equation}\label{eqp2}
\|f\|^2_{H^k (\mathbb{T}^3)} =\sum_{|\alpha|=0}^k \int_{\mathbb{T}^2}\int_{-\pi}^{\pi} |\partial^{\alpha}f(x_h,x_3)|^2 dx_3 dx_h  .
\end{equation}
Here, $\alpha = (\alpha_1, \alpha_2, \alpha_3)$ is a multi-index and $\partial ^{\alpha} = \partial_1^{\alpha_1}\partial_2^{\alpha_2}\partial_3^{\alpha_3}$.

Notice the condition \eqref{eqp1}, we have, for multi-index $\alpha=(\alpha_1,\alpha_2,0)$:
\begin{equation}\nonumber
  \frac{1}{2\pi}\int_{-\pi}^{\pi} \partial^{\alpha}f(x_h,x_3) dx_3 =0,
\end{equation}
And for multi-index $\alpha=(\alpha_1,\alpha_2,\alpha_3)$ where $\alpha_3 >0$, by the periodic boundary condition, we also have
\begin{equation}\nonumber
  \int_{-\pi}^{\pi} \partial^{\alpha}f(x_h,x_3) dx_3 = \partial_1^{\alpha_1}\partial_2^{\alpha_2}\partial_3^{\alpha_3-1} f(x_h,\cdot)\Big|_{-\pi}^{\pi} =0.
\end{equation}

Therefore, the average value of $\partial^{\alpha} f(x_h, \cdot)$ in $x_3$ direction over $[-\pi, \pi]$ is zero.
Hence, applying standard Poincar\'{e} inequality to $\partial^{\alpha}f(x_h,x_3)$ in the $x_3$ direction, we have $\forall x_h \in \mathbb{T}^2$:
\begin{equation}\nonumber
  \int_{-\pi}^{\pi} |\partial^{\alpha}f(x_h,x_3)|^2 dx_3 \lesssim \int_{-\pi}^{\pi} |\partial^{\alpha}\partial_3f(x_h,x_3)|^2 dx_3.
\end{equation}
According to the definition of $\|f\|_{H^k(\mathbb{T}^3)}$ i.e. \eqref{eqp2}, we finally obtain
\begin{equation}\nonumber
\begin{split}
\|f\|^2_{H^k (\mathbb{T}^3)} =&\sum_{|\alpha|=0}^k \int_{\mathbb{T}^2}\int_{-\pi}^{\pi} |\partial^{\alpha}f(x_h,x_3)|^2 dx_3 dx_h \\
\lesssim & \sum_{|\alpha|=0}^k \int_{\mathbb{T}^2}\int_{-\pi}^{\pi} |\partial^{\alpha}\partial_3f(x_h,x_3)|^2 dx_3 dx_h \\
= &\|\partial_3f\|^2_{H^k (\mathbb{T}^3)}.
\end{split}
\end{equation}

\end{proof}

Now, let us introduce the energy frame that will enable us to achieve our desired estimate.  Based on our discussion in Section 1, we define some time-weighted energies for the system \eqref{eq1.2}. The energies below are defined on the domain $\mathbb{R}^+ \times\mathbb{T}^3$. For $s\in \mathbb{N}$ and $0<\sigma<1$, we set
\allowdisplaybreaks[2]
\begin{align}\label{eqframe}
E_0(t) =& \sup_{0\leq \tau \leq t} (1+\tau)^{-\sigma} (\|u(\tau)\|_{H^{2s+1}}^2+\|b(\tau)\|_{H^{2s+1}}^2)\nonumber\\
&+ \int_{0}^{t}  (1+\tau)^{-1-\sigma} (\|u(\tau)\|_{H^{2s+1}}^2+\|b(\tau)\|_{H^{2s+1}}^2) \; d\tau\nonumber\\
&+ \int_{0}^{t}  (1+\tau)^{-\sigma}(\|u(\tau)\|_{H^{2s+2}}^2+\|\partial_3 b(\tau)\|_{H^{2s}}^2) \; d\tau,\nonumber\\
G_0(t) =& \sup_{0\leq \tau \leq t} (1+\tau)^{1-\sigma} (\|\partial_3 u(\tau)\|_{H^{2s}}^2+\|\partial_3 b(\tau)\|_{H^{2s}}^2)\nonumber\\
&+ \int_{0}^{t}  (1+\tau)^{1-\sigma}\|\partial_3 u(\tau)\|_{H^{2s+1}}^2 \; d\tau,\nonumber\\
G_1(t) =& \sup_{0\leq \tau \leq t} (1+\tau)^{3-\sigma} (\|\partial_3 u(\tau)\|_{H^{2s-2}}^2+\|\partial_3 b(\tau)\|_{H^{2s-2}}^2)\nonumber\\
&+ \int_{0}^{t}  (1+\tau)^{3-\sigma}\|\partial_3 u(\tau)\|_{H^{2s-1}}^2 \; d\tau,\nonumber\\
E_1(t) =& \sup_{0\leq \tau \leq t} (1+\tau)^{3-\sigma} \|u(\tau)\|_{H^{2s-2}}^2\nonumber\\
&+ \int_{0}^{t}  (1+\tau)^{3-\sigma}(\|u(\tau)\|_{H^{2s-1}}^2+\|\partial_3 b(\tau)\|_{H^{2s-3}}^2) \; d\tau,\nonumber\\
e_0(t) =& \sup_{0\leq \tau \leq t} \|b(\tau)\|_{H^{2s}}^2.
\end{align}

In the following, we will successively derive the estimate of each energy stated above. By \eqref{eq1.4} and Poincar$\mathrm{\acute{e}}$ inequality, we only need to consider the highest order norms in each energy.

\subsection{\textit{A priori} estimate}
First, we will deal with the highest order energy, i.e., $E_0(t)$. It shows that the highest order norm $H^{2s+1}(\mathbb{T}^3)$ of $u(t,\cdot)$ and $b(t,\cdot)$ will grow in the time evolution.
\begin{lemma}\label{lem1}
Assume that $s\geq 5$ and the energies are defined as in \eqref{eqframe}, then we have
\begin{equation}\label{E_0}\nonumber
E_0(t)\lesssim E_0(0)+E_0(t)E_1^{1/2}(t)+ E_1^{1/2}(t)e_0(t)+E_0^{5/6}(t)E_1^{1/6}(t)e_0^{1/2}(t)+ E_0(t)e_0^{1/2}(t).
\end{equation}
\end{lemma}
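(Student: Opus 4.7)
The plan is to close a top-order $H^{2s+1}$ energy identity on \eqref{eq1.2}, multiply by $(1+t)^{-\sigma}$, and integrate in $t$ to produce three of the four summands of $E_0(t)$; the missing $\int_0^t(1+\tau)^{-\sigma}\|\partial_3 b\|_{H^{2s}}^2\,d\tau$ must then be reconstructed by a separate cross-term argument before the nonlinear remainder is estimated. Specifically, I apply $\partial^\alpha$ with $|\alpha|\leq 2s+1$ to the two equations of \eqref{eq1.2}, pair with $\partial^\alpha u$ and $\partial^\alpha b$ in $L^2(\mathbb{T}^3)$, and sum. The linear coupling $\int\partial^\alpha\partial_3 u\cdot\partial^\alpha b+\int\partial^\alpha\partial_3 b\cdot\partial^\alpha u$ telescopes by periodic integration by parts in $x_3$, and the principal cubic terms cancel thanks to $\nabla\cdot u=\nabla\cdot b=0$, leaving
\[
\tfrac12\tfrac{d}{dt}\bigl(\|u\|_{H^{2s+1}}^2+\|b\|_{H^{2s+1}}^2\bigr)+\|\nabla u\|_{H^{2s+1}}^2 = R,
\]
where $R$ is a finite sum of commutators $\int[\partial^\alpha,v\cdot\nabla]w\cdot\partial^\alpha z$ with $v,w,z\in\{u,b\}$. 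Multiplying by $(1+t)^{-\sigma}$ and integrating in $\tau$ gives the $\sup$ piece and the $(1+\tau)^{-1-\sigma}$ integral in $E_0$, while Poincar\'e combined with \eqref{eq1.4} upgrades $\|\nabla u\|_{H^{2s+1}}^2$ to $\|u\|_{H^{2s+2}}^2$.

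To recover the $\partial_3 b$ dissipation I work with the cross quantity $I_\alpha=\int\partial^\alpha u\cdot\partial^\alpha\partial_3 b$ for $|\alpha|\leq 2s$. Differentiating in $t$, substituting $u_t=\Delta u-\nabla p+\partial_3 b+b\cdot\nabla b-u\cdot\nabla u$ and $b_t=\partial_3 u+b\cdot\nabla u-u\cdot\nabla b$, and using $\nabla\cdot b=0$ to kill the pressure pairing, I obtain
\[
\tfrac{d}{dt}I_\alpha = \|\partial^\alpha\partial_3 b\|^2 - \|\partial^\alpha\partial_3 u\|^2 + \int\partial^\alpha\Delta u\cdot\partial^\alpha\partial_3 b + \text{(nonlinear remainder)}.
\]
Rearranging, multiplying by $(1+\tau)^{-\sigma}$, integrating in $\tau$, and absorbing a small multiple of $\|\partial_3 b\|^2$ from the Cauchy--Schwarz bound of the Laplacian pairing into the left produces the target $\int_0^t(1+\tau)^{-\sigma}\|\partial_3 b\|_{H^{2s}}^2\,d\tau$; the leftover $\|\partial_3 u\|^2$ piece is already bounded by the viscous dissipation of $E_0$, and the nonlinear remainders are of the same type as $R$.

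The hard step is bounding $R$ plus the nonlinear remainders of the cross argument by exactly the five products on the right-hand side. Every commutator is cubic, and after Kato--Ponce it is dominated by a sum of $\|v\|_{L^\infty}\|\nabla w\|_{H^{2s+1}}\|z\|_{H^{2s+1}}$-type products with one low-frequency factor. When the low factor is $u$, Sobolev embedding $H^{2s-2}\hookrightarrow L^\infty$ (valid for $s\geq 5$) together with the decay built into $E_1$ gives $\|u\|_{L^\infty}\lesssim(1+\tau)^{-(3-\sigma)/2}E_1^{1/2}$; pairing against $(1+\tau)^{-\sigma/2}E_0^{1/2}$ and a Cauchy--Schwarz in $\tau$ produces the $E_0(t)E_1^{1/2}(t)$ contribution, and a variant with $\partial_3 u$ in the low slot and $\|b\|_{H^{2s}}$ measured by $e_0$ yields $E_1^{1/2}(t)e_0(t)$. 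The genuine obstacle is the commutator $\int[\partial^\alpha,b\cdot\nabla]b\cdot\partial^\alpha u$, which a priori couples two undamped copies of $b$. Following the strategy flagged in Section 1, I decompose $b\cdot\nabla b=b_h\cdot\nabla_h b+b_3\partial_3 b$: the second summand already carries the good direction, while for the first, the parity \eqref{eq1.3} forces $\frac{1}{2\pi}\int_{-\pi}^{\pi}b_h\,dx_3=0$, so Proposition \ref{prop} converts every $b_h$ factor into a $\partial_3 b_h$ factor with the same transverse regularity. A three-term Gagliardo--Nirenberg interpolation among $\|b\|_{H^{2s+1}}$ (inside $E_0$), $\|\partial_3 b\|_{H^{2s-2}}$ (inside $G_1\subset E_1$) and $\|b\|_{H^{2s}}$ (inside $e_0$) then generates the anomalous exponent $E_0^{5/6}(t)E_1^{1/6}(t)e_0^{1/2}(t)$, while a parallel split measuring one $b$-factor directly by $e_0^{1/2}$ yields $E_0(t)e_0^{1/2}(t)$. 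Matching the interpolation exponents to the time powers $\sigma$, $1-\sigma$, $3-\sigma$ so that every contribution is absorbable is the most delicate bookkeeping; once it is settled, summing over the commutators and remainders delivers the claimed estimate.
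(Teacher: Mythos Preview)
Your two-step architecture (top-order $H^{2s+1}$ energy plus a cross pairing $\int\partial^\alpha u\cdot\partial^\alpha\partial_3 b$ at level $|\alpha|\le 2s$) matches the paper exactly, and you correctly isolate the $b\cdot\nabla b$ interaction as the dangerous piece and invoke the $b_h/b_3$ split with Proposition~\ref{prop}. However, two parts of your bookkeeping do not close as written.

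First, your derivation of the exponent $E_0^{5/6}E_1^{1/6}e_0^{1/2}$ is misattributed. In the paper this term arises from the Step~1 commutators $I_3$ and $I_4$ \emph{without} any Poincar\'e trick on $b$: one bounds $|I_4|\lesssim(1+t)^{-\sigma}\|b\|_{H^{2s+1}}\|b\|_{H^{2s}}\|u\|_{H^{2s+1}}$, places $\|b\|_{H^{2s}}$ in $e_0^{1/2}$, $\|b\|_{H^{2s+1}}$ in the $(1+\tau)^{-1-\sigma}$ integral of $E_0$, and then interpolates the \emph{velocity} via $\|u\|_{H^{2s+1}}^2\lesssim\|u\|_{H^{2s+2}}^{4/3}\|u\|_{H^{2s-1}}^{2/3}$ to get $\int(1+\tau)^{1-\sigma}\|u\|_{H^{2s+1}}^2\lesssim E_0^{2/3}E_1^{1/3}$. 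Your proposed ``three-term Gagliardo--Nirenberg among $\|b\|_{H^{2s+1}}$, $\|\partial_3 b\|_{H^{2s-2}}$, $\|b\|_{H^{2s}}$'' is both anisotropic (so standard GN does not apply) and relies on $\|\partial_3 b\|_{H^{2s-2}}$ sitting in $G_1$, which is \emph{not} part of $E_1$; the lemma's right-hand side contains only $E_0,E_1,e_0$, so you cannot call on $G_1$ here.

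Second, and more importantly, you wave off the nonlinear remainders of the cross argument as ``of the same type as $R$'', but in the paper the critical use of Proposition~\ref{prop} is precisely in the Step~2 term $I_6=-\,(1+t)^{-\sigma}\int\nabla^{2s}(b\cdot\nabla b)\cdot\nabla^{2s}\partial_3 b$, where $b\cdot\nabla b$ is paired with $\partial_3 b$ rather than $u$. This is the full product (not a commutator), and it is here that the $b_h/b_3$ split together with $\|b_h\|_{H^k}\lesssim\|\partial_3 b_h\|_{H^k}$ produces the contribution $E_0(t)e_0^{1/2}(t)$ as well as an $E_0(t)E_1^{1/2}(t)$ piece. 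Your Step~1 treatment of $[\partial^\alpha,b\cdot\nabla]b\cdot\partial^\alpha u$ with Poincar\'e is unnecessary there and does not substitute for the missing analysis of $I_6$. So the plan is sound, but the Poincar\'e-in-$x_3$ argument must be executed on the Step~2 remainder against $\partial_3 b$, and the $5/6$--$1/6$ exponents must be produced by velocity interpolation, for the estimate to actually close with only $E_0,E_1,e_0$ on the right.
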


\begin{proof}
We divide the proof into two steps. Instead of deriving the estimate of $E_0$ directly, we shall first get the estimate of $E_{0,1}(t)$ defined by

\begin{equation}\label{E01}
\begin{split}
E_{0,1}(t)\triangleq&\sup_{0\leq \tau \leq t} (1+\tau)^{-\sigma} (\|u(\tau)\|_{H^{2s+1}}^2+\|b(\tau)\|_{H^{2s+1}}^2)
+ \int_{0}^{t}  (1+\tau)^{-\sigma}\|u(\tau)\|_{H^{2s+2}}^2 \; d\tau\\
&\qquad+ \int_{0}^{t}  (1+\tau)^{-1-\sigma} (\|u(\tau)\|_{H^{2s+1}}^2+\|b(\tau)\|_{H^{2s+1}}^2) \; d\tau .
\end{split}
\end{equation}
$\mathbf{Step\;\;1}$

Applying $\nabla^{2s+1}$ derivative on the system \eqref{eq1.2}. Then, taking inner product with $\nabla^{2s+1} b$ for the first equation of system \eqref{eq1.2} and taking inner product with $\nabla^{2s+1} u$ for the second equation of system \eqref{eq1.2}. Adding them up and multiplying the time weight $(1+t)^{-\sigma}$, we get
\begin{equation}\label{eqlem1.1}
\begin{split}
\frac{1}{2}\frac{d}{dt}&(1+t)^{-\sigma}(\|u\|_{\dot H^{2s+1}}^2+\|b\|_{\dot H^{2s+1}}^2)+ \frac{\sigma}{2} (1+t)^{-1-\sigma}(\|u\|_{\dot H^{2s+1}}^2+\|b\|_{\dot H^{2s+1}}^2)\\
&+(1+t)^{-\sigma}\|u\|_{\dot H^{2s+2}}^2
= I_1 + I_2 + I_3 + I_4,
\end{split}
\end{equation}
where,
\allowdisplaybreaks[2]
\begin{align}
I_1=&-(1+t)^{-\sigma}\int_{\mathbb{T}^3} u\cdot \nabla \nabla^{2s+1} u \; \nabla^{2s+1} u + u\cdot \nabla \nabla^{2s+1} b \; \nabla^{2s+1} b \;dx\nonumber\\
&+(1+t)^{-\sigma}\int_{\mathbb{T}^3} b\cdot \nabla \nabla^{2s+1} b \; \nabla^{2s+1} u + b\cdot \nabla \nabla^{2s+1} u \; \nabla^{2s+1} b \;dx\nonumber\\
&+(1+t)^{-\sigma}\int_{\mathbb{T}^3} \nabla^{2s+1}\partial_3 u \; \nabla^{2s+1} b + \nabla^{2s+1}\partial_3 b \;
\nabla^{2s+1}u \;dx,\nonumber\\
I_2=&-(1+t)^{-\sigma}\sum_{k=1}^{2s+1}\int_{\mathbb{T}^3} \nabla^{k}u\cdot \nabla \nabla^{2s+1-k} u\; \nabla^{2s+1}u \;dx,\nonumber\\
I_3=&(1+t)^{-\sigma}\sum_{k=1}^{s}\int_{\mathbb{T}^3}(\nabla^{k}b \cdot \nabla \nabla^{2s+1-k} u- \nabla^{k} u \cdot \nabla
\nabla^{2s+1-k} b) \nabla^{2s+1} b\;dx\nonumber\\
&+(1+t)^{-\sigma}\sum_{k=s+1}^{2s+1}\int_{\mathbb{T}^3}(\nabla^{k}b \cdot \nabla \nabla^{2s+1-k} u- \nabla^{k} u \cdot \nabla
\nabla^{2s+1-k} b) \nabla^{2s+1} b\;dx,\nonumber\\
I_4=&-(1+t)^{-\sigma}\sum_{k=1}^{s}\int_{\mathbb{T}^3}\nabla^{k}b \cdot \nabla \nabla^{2s+1-k} b\; \nabla^{2s+1} u\;dx\nonumber\\
&-(1+t)^{-\sigma}\sum_{k=s+1}^{2s+1}\int_{\mathbb{T}^3}\nabla^{k}b \cdot \nabla \nabla^{2s+1-k} b\; \nabla^{2s+1} u\;dx.\nonumber
\end{align}

We shall estimate each term on the right hand side of \eqref{eqlem1.1}. First, for the term $I_1$, using integration by parts and the divergence free condition, we have
\begin{equation}\label{eqI1}
I_1 = 0.
\end{equation}

The main idea for the next estimates is that we will carefully derive the bound of each term so that it can be controlled by the combination of energies defined in \eqref{eqframe}. By H\"{o}lder inequality and Sobolev imbedding theorem, we have
\begin{equation}\nonumber
\begin{split}
|I_2| \lesssim &(1+t)^{-\sigma}\|u\|_{W^{s+1,\infty}}\|u\|_{H^{2s+1}}^2\\
\lesssim & (1+t)^{-\sigma}\|u\|_{H^{s+3}}\|u\|_{H^{2s+1}}^2\\
\lesssim & (1+t)^{-\sigma}\|u\|_{H^{2s-1}}\|u\|_{H^{2s+1}}^2.
\end{split}
\end{equation}
provided that $s \geq 4$.
Hence,
\begin{equation}\label{eqI2}
\int_{0}^{t} |I_2(\tau)| \;d\tau \lesssim \sup_{0\leq \tau \leq t}  (1+\tau)^{-\sigma}\|u\|_{H^{2s+1}}^2 \int_{0}^{t} \|u\|_{H^{2s-1}}\;d\tau \lesssim E_0(t) E_1^{1/2}(t).
\end{equation}

Similarly, for the first part of $I_3$ (we denote the first term on the right hand as $I_{3,1}$ and the second term as $I_{3,2}$),  we see that
\begin{equation}\nonumber
\begin{split}
|I_{3,1}| \lesssim & (1+t)^{-\sigma}(\|b\|_{W^{s,\infty}} \|u\|_{H^{2s+1}} \|b\|_{H^{2s+1}} + \|u\|_{W^{s,\infty}}\|b\|_{H^{2s+1}}^2)\\
\lesssim & (1+t)^{-\sigma}(\|b\|_{H^{s+2}} \|u\|_{H^{2s+1}} \|b\|_{H^{2s+1}} + \|u\|_{H^{s+2}}\|b\|_{H^{2s+1}}^2)\\
\lesssim & (1+t)^{-\sigma}(\|b\|_{H^{2s}} \|u\|_{H^{2s+1}} \|b\|_{H^{2s+1}} + \|u\|_{H^{2s-1}}\|b\|_{H^{2s+1}}^2).
\end{split}
\end{equation}
provided that $s\geq 3$. And for the second part of $I_3$, we have
\begin{equation}\nonumber
\begin{split}
|I_{3,2}|\lesssim & (1+t)^{-\sigma} (\|b\|_{H^{2s+1}}^2 \|u\|_{W^{s+1,\infty}}+ \|u\|_{H^{2s+1}}\|b\|_{W^{s+1,\infty}}\|b\|_{H^{2s+1}})\\
\lesssim & (1+t)^{-\sigma} (\|b\|_{H^{2s+1}}^2 \|u\|_{H^{s+3}}+ \|u\|_{H^{2s+1}}\|b\|_{H^{s+3}}\|b\|_{H^{2s+1}})\\
\lesssim & (1+t)^{-\sigma} (\|b\|_{H^{2s+1}}^2 \|u\|_{H^{2s-1}}+ \|u\|_{H^{2s+1}}\|b\|_{H^{2s}}\|b\|_{H^{2s+1}}).
\end{split}
\end{equation}
provided that $s\geq 4$.
Hence, combining $I_{3,1}$ and $I_{3,2}$ together and using H\"{o}lder inequality, we get
\begin{equation}\label{eqI3.1}
\begin{split}
&\int_{0}^{t} |I_{3}(\tau)|\; d\tau \\
\lesssim &\sup_{0\leq \tau \leq t} \|b\|_{H^{2s}}\Big(\int_{0}^{t} (1+\tau)^{-1-\sigma}\|b\|_{H^{2s+1}}^2 \; d\tau \Big)^\frac{1}{2}
\Big(\int_{0}^{t} (1+\tau)^{1-\sigma}\|u\|_{H^{2s+1}}^2 \; d\tau \Big)^\frac{1}{2}\\
&+\sup_{0\leq \tau \leq t}(1+\tau)^{-\sigma} \|b\|_{H^{2s+1}}^2\int_{0}^{t}\|u\|_{H^{2s-1}} d\tau.
\end{split}
\end{equation}
Using Gagliardo--Nirenberg interpolation inequality and H\"{o}lder inequality, we can bound
\begin{equation}\label{eqI3.2}
\begin{split}
\int_{0}^{t} (1+\tau)^{1-\sigma}\|u\|_{H^{2s+1}}^2 \; d\tau \lesssim &
\int_{0}^{t} \big[(1+\tau)^{3-\sigma}\|u\|_{H^{2s-1}}^2\big]^\frac{1}{3} \big[(1+\tau)^{-\sigma}\|u\|_{H^{2s+2}}^2\big]^\frac{2}{3}\; d\tau \\
\lesssim &E_0^{2/3}(t)E_1^{1/3}(t).
\end{split}
\end{equation}
Thus, combining \eqref{eqI3.1} with \eqref{eqI3.2}, we finally obtain the estimate of $I_3$
\begin{equation}\label{eqI3}
\int_{0}^{t} |I_{3}(\tau)|\; d\tau \lesssim E_0^{5/6}(t)E_1^{1/6}(t)e_0^{1/2}(t) + E_1^{1/2}(t)e_0(t).
\end{equation}

Next, for the last term $I_4$, we use the same method as above and obtain
\begin{equation}\nonumber
\begin{split}
|I_4| \lesssim& (1+t)^{-\sigma}\|b\|_{H^{2s+1}}\|b\|_{W^{s+1,\infty}}\|u\|_{H^{2s+1}} \\
\lesssim& (1+t)^{-\sigma}\|b\|_{H^{2s+1}}\|b\|_{H^{2s}}\|u\|_{H^{2s+1}},
\end{split}
\end{equation}
provided that $s\geq 3$.
Hence,
\begin{equation}\label{eqI4}
\int_{0}^{t} |I_{4}(\tau)|\; d\tau \lesssim E_0^{5/6}(t)E_1^{1/6}(t)e_0^{1/2}(t).
\end{equation}

Summing up the estimates for $I_1\thicksim I_4$, i.e., \eqref{eqI1}, \eqref{eqI2}, \eqref{eqI3} and \eqref{eqI4}, and integrating \eqref{eqlem1.1} in time, we can get the estimate of $E_{0,1}(t)$ which is defined in \eqref{E01}
\begin{equation}\label{eqE01}
E_{0,1}(t)\lesssim  E_0(0)+E_0(t)E_1^{1/2}(t)+ E_1^{1/2}(t)e_0(t)+E_0^{5/6}(t)E_1^{1/6}(t)e_0^{1/2}(t).
\end{equation}
Here, we have used the Poincar$\mathrm{\acute{e}}$ inequality to consider the highest order norms only.
$\mathbf{Step\;\; 2}$

Now, let us work for the remaining term in $E_0(t)$. Applying $\nabla^{2s}$ on the second equation of system \eqref{eq1.2} and taking
inner product with $\nabla^{2s} \partial_3 b$,  then multiplying the time weight $(1+t)^{-\sigma}$ we get
\begin{equation}\label{eqlem1.2}
(1+t)^{-\sigma} \|\partial_3 b\|_{\dot H^{2s}}^2 = I_5 + I_6 + I_7 + I_8,
\end{equation}
where
\begin{equation}\nonumber
\begin{split}
I_5 =& (1+t)^{-\sigma} \int_{\mathbb{T}^3} \nabla^{2s}(u\cdot \nabla u)\; \nabla^{2s}\partial_3 b \; dx
- (1+t)^{-\sigma} \int_{\mathbb{T}^3} \nabla^{2s}\Delta u \; \nabla^{2s}\partial_3 b \; dx,\\
I_6 =& -(1+t)^{-\sigma} \sum_{k=0}^{s} \int_{\mathbb{T}^3} \nabla^{k}b_h\cdot \nabla_h \nabla^{2s-k} b \; \nabla^{2s}\partial_3 b
+ \nabla^{k}b_3\cdot \nabla_3 \nabla^{2s-k} b \; \nabla^{2s}\partial_3 b\;dx\\
&- (1+t)^{-\sigma} \sum_{k=s+1}^{2s} \int_{\mathbb{T}^3} \nabla^{k}b_h\cdot \nabla_h \nabla^{2s-k} b \; \nabla^{2s}\partial_3 b
+ \nabla^{k}b_3\cdot \nabla_3 \nabla^{2s-k} b \; \nabla^{2s}\partial_3 b\;dx,\\
I_7 =& \frac{d}{dt}(1+t)^{-\sigma} \int_{\mathbb{T}^3} \nabla^{2s} u \; \nabla^{2s}\partial_3 b\; dx
+ \sigma (1+t)^{-1-\sigma} \int_{\mathbb{T}^3} \nabla^{2s} u \; \nabla^{2s}\partial_3 b\; dx,\\
I_8 =& (1+t)^{-\sigma} \int_{\mathbb{T}^3} \nabla^{2s} \partial_3 u \; \nabla^{2s}\partial_t b\; dx.
\end{split}
\end{equation}

Like the process in Step 1, we shall derive the estimate of each term on the right hand side of \eqref{eqlem1.2}. First, using H\"{o}lder inequality and Sobolev imbedding theorem, we can bound $I_5$ as follows:
\begin{equation}\nonumber
\begin{split}
|I_5| \lesssim & (1+t)^{-\sigma} \|u\|_{H^{2s+1}}\|u\|_{H^{s+2}}\|\partial_3 b\|_{H^{2s}}
+ (1+t)^{-\sigma} \|u\|_{H^{2s+2}} \|\partial_3 b\|_{H^{2s}}\\
\lesssim & (1+t)^{-\sigma} \|u\|_{H^{2s+1}}\| b\|_{H^{2s+1}}\|u\|_{H^{2s-1}}
+ (1+t)^{-\sigma} \|u\|_{H^{2s+2}} \|\partial_3 b\|_{H^{2s}},
\end{split}
\end{equation}
provided that $s\geq 3$. Thus, we have
\begin{equation}\label{eqI5}
\int_{0}^{t} |I_5(\tau)| \; d\tau \lesssim E_0(t)E_1^{1/2}(t) + E_{0,1}^{1/2}(t) \big(\int_{0}^{t} (1+\tau)^{-\sigma} \|\partial_3 b\|_{H^{2s}}^2 d\tau\big )^{1/2}.
\end{equation}

Next, we turn to the estimate of $I_6$. Notice that $I_6$ is the most wild term in our proof, due to the bad  behaviour of $b\cdot\nabla b$. Although we have already divided this term into $b_h \cdot\nabla_h b$ and $b_3\cdot\nabla_3 b$ two terms, the estimate for $b_h\cdot\nabla_h b$ is still nontrivial. Thanks to the Proposition \ref{prop} we have proved in the beginning of this section, we can overcome this problem using the following strategy.

Notice the property \eqref{eq1.3} we easily know that in the $x_3$ direction, the average value of function $b_h(x_h,\cdot)$ over $[-\pi,\pi]$ equals zero. Thus, using the Proposition 2.1, H\"{o}lder inequality and Sobolev imbedding theorem we get
\begin{equation}\nonumber
\begin{split}
|I_6| \lesssim &(1+t)^{-\sigma}(\|b_h\|_{W^{s,\infty}} \|b\|_{H^{2s+1}} \|\partial_3 b\|_{H^{2s}}
+ \|b_3\|_{W^{s,\infty}}\|\partial_3 b\|_{H^{2s}}^2)\\
&+ (1+t)^{-\sigma}(\|b_h\|_{H^{2s}} \|b\|_{W^{s,\infty}} \|\partial_3 b\|_{H^{2s}}
+ \|b_3\|_{H^{2s}}\|\partial_3 b\|_{W^{s-1,\infty}}\|\partial_3 b\|_{H^{2s}})\\
\lesssim &(1+t)^{-\sigma}(\|\partial_3 b_h\|_{H^{s+2}} \|b\|_{H^{2s+1}} \|\partial_3 b\|_{H^{2s}}
+ \|b_3\|_{H^{s+2}}\|\partial_3 b\|_{H^{2s}}^2)\\
&+ (1+t)^{-\sigma}(\|\partial_3 b_h\|_{H^{2s}} \|b\|_{H^{s+2}} \|\partial_3 b\|_{H^{2s}}
+ \|b_3\|_{H^{2s}}\|\partial_3 b\|_{H^{s+1}}\|\partial_3 b\|_{H^{2s}})\\
\lesssim & (1+t)^{-\sigma}(\|\partial_3 b\|_{H^{2s-3}} \|b\|_{H^{2s+1}} \|\partial_3 b\|_{H^{2s}}
+ \|b\|_{H^{2s}}\|\partial_3 b\|_{H^{2s}}^2),
\end{split}
\end{equation}
provided that $s\geq 5$.
Hence,
\begin{equation}\label{eqI6}
\int_{0}^{t} |I_6(\tau)|\; d\tau \lesssim E_0(t)E_1^{1/2}(t) + E_0(t)e_0^{1/2}(t).
\end{equation}

For the next term $I_7$, using H\"{o}lder inequality, it is straightforward to see
\begin{equation}\label{eqI7}
|\int_{0}^{t}I_7 (\tau) \; d\tau| \lesssim E_{0,1}(t).
\end{equation}

For the last term $I_8$, using the first equation of system \eqref{eq1.2} and integrating by parts, we find
\begin{equation}\nonumber
\begin{split}
I_8 =& (1+t)^{-\sigma}\int_{\mathbb{T}^3} \nabla^{2s}\partial_3 u \nabla^{2s}( \partial_3 u+ b\cdot \nabla u - u\cdot \nabla b)\;dx \\
= &-(1+t)^{-\sigma}\int_{\mathbb{T}^3} \nabla^{2s+1}\partial_3 u \nabla^{2s-1}( \partial_3 u+ b\cdot \nabla u - u\cdot \nabla b)\;dx.
\end{split}
\end{equation}
Thus, by H\"{o}lder inequality and Sobolev imbedding theorem, we have
\begin{equation}\nonumber
\begin{split}
|I_8| \lesssim  & (1+t)^{-\sigma} (\|u\|_{H^{2s+2}}^2 + \|u\|_{H^{2s+2}}^2 \|b\|_{H^{2s}}),
\end{split}
\end{equation}
provided that $s\geq 2$.
Hence, we arrive at
\begin{equation}\label{eqI8}
\int_{0}^{t} |I_8(\tau)| \; d\tau \lesssim E_{0,1}(t) + E_0(t)e_0^{1/2}(t).
\end{equation}

Summing up the estimates for $I_5\thicksim I_8$, i.e., \eqref{eqI5}, \eqref{eqI6}, \eqref{eqI7} and \eqref{eqI8}, and integrating \eqref{eqlem1.2} in time, using Young inequality and Poincar$\mathrm{\acute{e}}$ inequality we can easily bound
\begin{equation}\label{eqE02}
\int_{0}^{t}(1+\tau)^{-\sigma}\|\partial_3 b\|_{H^{2s}}^2 \; d\tau \lesssim E_{0,1}+ E_0(t)E_1^{1/2}(t)
+ E_0(t)e_0^{1/2}(t).
\end{equation}

This gives the estimate for the last term in $E_0(t)$. Now, multiplying \eqref{eqE01} by suitable large number and plus \eqref{eqE02}, we then complete the proof of this lemma.

\end{proof}

Next, we work with the lower order energies defined in \eqref{eqframe}, especially we want to derive the decay estimate and get the uniform bound of lower order norms of magnetic field.
\begin{lemma}\label{lem2}
Assume that $s\geq 5$ and the energies are defined as in \eqref{eqframe}, then we have
\begin{equation}\label{G_0}\nonumber
\begin{split}
G_0(t)\lesssim &E_0(t) +  G_0(t) E_1^{1/2}(t)+ E_0^{1/2}(t)G_0^{1/2}(t)[G_1^{1/2}(t)+E_1^{1/2}(t)]\\
& + E_0^{1/2}(t)G_0^{1/4}(t)G_1^{1/4}(t)e_0^{1/2}(t).\\
\end{split}
\end{equation}
\end{lemma}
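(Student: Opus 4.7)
The plan is to imitate Step 1 of Lemma~\ref{lem1}, but with $\partial_3\nabla^{2s}$ in place of $\nabla^{2s+1}$ and with the larger time-weight $(1+t)^{1-\sigma}$; no analogue of Step 2 is needed, since $G_0$ does not contain any magnetic dissipation integral. First I would apply $\partial_3\nabla^{2s}$ to the two equations of \eqref{eq1.2} and pair them with $\partial_3\nabla^{2s}u$ and $\partial_3\nabla^{2s}b$, respectively. The divergence-free structure kills the leading transport self-interactions exactly as in $I_1$, and one integration by parts in $x_3$ cancels the linear couplings between $\partial_3\nabla^{2s}\partial_3 u$ and $\partial_3\nabla^{2s}b$ coming from the two equations. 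After multiplying by $(1+t)^{1-\sigma}$ and moving the weight derivative to the right, the resulting identity reads
\begin{equation}\nonumber
\begin{split}
\tfrac{1}{2}\tfrac{d}{dt}&\bigl[(1+t)^{1-\sigma}(\|\partial_3 u\|_{\dot H^{2s}}^2+\|\partial_3 b\|_{\dot H^{2s}}^2)\bigr] + (1+t)^{1-\sigma}\|\partial_3 u\|_{\dot H^{2s+1}}^2 \\
&= \tfrac{1-\sigma}{2}(1+t)^{-\sigma}(\|\partial_3 u\|_{\dot H^{2s}}^2+\|\partial_3 b\|_{\dot H^{2s}}^2) + J_1+J_2+J_3+J_4,
\end{split}
\end{equation}
with $J_1,\ldots,J_4$ the exact analogues of $I_1,\ldots,I_4$, each carrying one additional $\partial_3$ derivative.

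After integration in $t$, the $\tfrac{1-\sigma}{2}(1+t)^{-\sigma}$ term on the right is absorbed into $E_0(t)$: since $\|\partial_3 u\|_{H^{2s}}\le \|u\|_{H^{2s+2}}$, its time-integrated contribution is controlled by $E_0(t)$, while the corresponding $\|\partial_3 b\|_{H^{2s}}^2$ integral is one of the dissipative summands of $E_0$ by definition. Together with the initial data bound $\|\partial_3 u_0\|_{\dot H^{2s}}^2+\|\partial_3 b_0\|_{\dot H^{2s}}^2\lesssim E_0(0)\le E_0(t)$, this produces the leading term $E_0(t)$ in the claim.

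The four nonlinear terms are handled by the same H\"older--Sobolev splitting as $I_2$--$I_4$, now calibrated against all five energies. For the pure velocity commutator $J_2$, a pointwise estimate of the type $(1+t)^{1-\sigma}\|u\|_{H^{2s-1}}\|\partial_3 u\|_{H^{2s}}^2$ (valid for $s\ge 4$) yields, after placing one factor of $\|\partial_3 u\|_{H^{2s}}$ inside the sup part of $G_0$ and the other inside the dissipative $G_0$ integral, the contribution $G_0(t)E_1^{1/2}(t)$. The mixed $u$-$b$ commutators $J_3$ are split so that one top-order factor lives in $G_0^{1/2}$, one in $E_0^{1/2}$, and the remaining low-order factor is placed either in $G_1^{1/2}$ (whenever it carries a $\partial_3$) or in $E_1^{1/2}$ (otherwise); using the same Gagliardo--Nirenberg trick as in \eqref{eqI3.2} to convert $(1+\tau)^{-\sigma}$ dissipation into $(1+\tau)^{1-\sigma}$ dissipation, this produces the contribution $E_0^{1/2}(t)G_0^{1/2}(t)[G_1^{1/2}(t)+E_1^{1/2}(t)]$.

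The genuinely delicate term is $J_4\sim \int(1+t)^{1-\sigma}\partial_3\nabla^{2s}(b\cdot\nabla b)\cdot\partial_3\nabla^{2s}u\,dx$, the $G_0$-analogue of the wild term $I_6$. Splitting $b\cdot\nabla b=b_h\cdot\nabla_h b+b_3\partial_3 b$, the $b_3\partial_3 b$ piece is tame because $\partial_3 b$ is already a good quantity. For the bad piece $b_h\cdot\nabla_h b$, I invoke the parity structure \eqref{eq1.3}, which forces $b_h(x_h,\cdot)$ to be odd in $x_3$ and hence of vanishing one-dimensional average, and apply Proposition~\ref{prop} to trade each occurrence of $b_h$ for the better $\partial_3 b_h$, thereby avoiding the direct interaction of two wild quantities. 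A careful H\"older split of the resulting tri-linear form, with the top $b$-norm interpolated between $e_0$ (supplying the $e_0^{1/2}$ factor) and the $G_0$/$G_1$ families, then delivers precisely the last contribution $E_0^{1/2}(t)G_0^{1/4}(t)G_1^{1/4}(t)e_0^{1/2}(t)$. Matching these interpolation exponents exactly is the main technical hurdle; once it is done, summing the four contributions and taking $\sup_{t}$ on the left closes the estimate.
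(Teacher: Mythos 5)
Your proposal follows essentially the same route as the paper's proof: the same $\partial_3\nabla^{2s}$ energy identity with weight $(1+t)^{1-\sigma}$, cancellation of the leading transport and linear coupling terms by integration by parts and the divergence-free condition, and commutator estimates calibrated against the five energies, with the interpolation $\int_0^t(1+\tau)^{2-\sigma}\|\partial_3 u\|_{H^{2s}}^2\,d\tau\lesssim G_0^{1/2}(t)G_1^{1/2}(t)$ supplying the $G_0^{1/4}G_1^{1/4}$ factor. The one point where you diverge is in invoking the parity of $b_h$ and Proposition \ref{prop} for the $\partial_3\nabla^{2s}(b\cdot\nabla b)$ commutator: this is available but not needed here, since the extra $\partial_3$ guarantees that every surviving commutator term already carries at least one good factor $\partial_3 b$, so H\"older, Sobolev embedding and the interpolation above suffice (the paper reserves the Poincar\'e/parity argument for the terms without the extra $\partial_3$, such as $I_6$ in Lemma \ref{lem1}).
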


\begin{proof}
First, applying $\nabla^{2s}\partial_3$ derivative on the system \eqref{eq1.2}. Then, taking inner product with $\nabla^{2s} \partial_3 b$ for the first equation of system \eqref{eq1.2} and taking inner product with $\nabla^{2s} \partial_3 u$ for the second equation of system \eqref{eq1.2}. Summing them up and multiplying the time weight $(1+t)^{1-\sigma}$ we obtain
\begin{equation}\label{eqlem2.1}
\begin{split}
\frac{1}{2}\frac{d}{dt}(1+t)^{1-\sigma}(\|\partial_3 u\|_{\dot H^{2s}}^2+\|\partial_3 b\|_{\dot H^{2s}}^2)+(1+t)^{1-\sigma}\|\partial_3 u\|_{\dot H^{2s+1}}^2=  \sum_{i=1}^{6} J_i,
\end{split}
\end{equation}
where,
\allowdisplaybreaks[2]
\begin{align}
J_1=&\frac{1-\sigma}{2} (1+t)^{-\sigma}(\|\partial_3 u\|_{\dot H^{2s}}^2+\|\partial_3 b\|_{\dot H^{2s}}^2),\nonumber\\
J_2=&-(1+t)^{1-\sigma}\int_{\mathbb{T}^3} u\cdot \nabla \nabla^{2s} \partial_3 u \; \nabla^{2s} \partial_3 u + u\cdot \nabla \nabla^{2s}
\partial_3 b \; \nabla^{2s} \partial_3 b \;dx\nonumber\\
&+(1+t)^{1-\sigma}\int_{\mathbb{T}^3} b\cdot \nabla \nabla^{2s} \partial_3 b \; \nabla^{2s} \partial_3 u + b\cdot \nabla \nabla^{2s} \partial_3 u \; \nabla^{2s} \partial_3 u \;dx\nonumber\\
&+(1+t)^{1-\sigma}\int_{\mathbb{T}^3} \nabla^{2s}\partial_3^2 u \; \nabla^{2s} \partial_3 b + \nabla^{2s}\partial_3^2 b \;
\nabla^{2s} \partial_3 u \;dx,\nonumber\\
J_3=&-(1+t)^{1-\sigma}\sum_{k=1}^{2s}\int_{\mathbb{T}^3} \nabla^{k}u\cdot \nabla \nabla^{2s-k} \partial_3 u\; \nabla^{2s} \partial_3 u \;dx\nonumber\\
&-(1+t)^{1-\sigma}\sum_{k=0}^{2s}\int_{\mathbb{T}^3} \nabla^{k}\partial_3 u\cdot \nabla \nabla^{2s-k}  u\; \nabla^{2s} \partial_3 u \;dx,\nonumber\\
J_4=&-(1+t)^{1-\sigma}\sum_{k=1}^{2s} \int_{\mathbb{T}^3} \nabla^{k} u \cdot \nabla \nabla^{2s-k}\partial_3 b \;\nabla^{2s}\partial_3 b \;dx\nonumber\\
& -(1+t)^{1-\sigma}\sum_{k=0}^{2s} \int_{\mathbb{T}^3} \nabla^{k} \partial_3u \cdot \nabla \nabla^{2s-k} b \;\nabla^{2s}\partial_3 b \;dx,\nonumber\\
J_5=&(1+t)^{1-\sigma}\sum_{k=0}^{2s} \int_{\mathbb{T}^3} \nabla^{k} \partial_3 b \cdot \nabla \nabla^{2s-k} u \;\nabla^{2s}\partial_3 b \;dx\nonumber\\
&+(1+t)^{1-\sigma}\sum_{k=1}^{2s} \int_{\mathbb{T}^3} \nabla^{k} b \cdot \nabla \nabla^{2s-k} \partial_3 u \;\nabla^{2s}\partial_3 b \;dx,\nonumber\\
J_6=&(1+t)^{1-\sigma}\sum_{k=1}^{2s}\int_{\mathbb{T}^3}\nabla^{k}b \cdot \nabla \nabla^{2s-k} \partial_3 b\; \nabla^{2s} \partial_3 u\;dx\nonumber\\
&+(1+t)^{1-\sigma}\sum_{k=0}^{2s}\int_{\mathbb{T}^3}\nabla^{k}\partial_3 b \cdot \nabla \nabla^{2s-k} b\; \nabla^{2s} \partial_3 u\;dx.\nonumber
\end{align}

Like the proof in Lemma \ref{lem1}, we shall now estimate each term on the right hand side of \eqref{eqlem2.1}. First, for the term $J_1$, it is easy to see that

\begin{equation}\label{eqJ1}
\int_{0}^{t} |J_1(\tau)| \; d\tau \lesssim E_0(t).
\end{equation}

Using integration by parts and divergence free condition, it is clear that
\begin{equation}\label{eqJ2}
J_2=0.
\end{equation}

For each term in $J_3$, we divide it into two parts: $k\leq s$ and $k > s$. We treat these two cases respectively and estimate as follows:
\begin{equation}\nonumber
\begin{split}
|J_3| \lesssim &(1+t)^{1-\sigma} (\|u\|_{W^{s,\infty}}\|\partial_3 u\|_{H^{2s}}^2+\|u\|_{H^{2s}}\|\partial_3 u\|_{W^{s,\infty}}\|\partial_3 u\|_{H^{2s}})\\
&+(1+t)^{1-\sigma} (\|\partial_3 u\|_{W^{s,\infty}}\| u\|_{H^{2s+1}}\|\partial_3 u\|_{H^{2s}}+\|\partial_3 u\|_{H^{2s}}^2\| u\|_{W^{s,\infty}})\\
\lesssim & (1+t)^{1-\sigma}( \|u\|_{H^{2s-1}}\|\partial_3 u\|_{H^{2s}}^2+\|u\|_{H^{2s+1}}\|\partial_3 u\|_{H^{2s-1}}\|\partial_3 u\|_{H^{2s}}),
\end{split}
\end{equation}
provided that $s\geq 3$. Thus, we have
\begin{equation}\label{eqJ3}
\begin{split}
\int_{0}^{t} |J_3(\tau)| \; d\tau \lesssim & G_0(t)\cdot \int_{0}^{t} \|u\|_{H^{2s-1}} \; d\tau
+ E_0^{1/2}(t)G_0^{1/2}(t) \int_{0}^{t} (1+\tau)^{1/2}\|\partial_3 u\|_{H^{2s-1}} \; d\tau \\
\lesssim & G_0(t) E_1^{1/2}(t)+ E_0^{1/2}(t)G_0^{1/2}(t)G_1^{1/2}(t).
\end{split}
\end{equation}

The term $J_4$ can be estimated by the same method as in $J_3$ as follows
\begin{equation}\nonumber
\begin{split}
|J_4|\lesssim & (1+t)^{1-\sigma} (\|u\|_{W^{s,\infty}}\|\partial_3 b\|_{H^{2s}}^2
+ \|u\|_{H^{2s}}\|\partial_3 b\|_{W^{s,\infty}}\|\partial_3 b\|_{H^{2s}})\\
&+ (1+t)^{1-\sigma} (\|\partial_3 u\|_{W^{s,\infty}}\| b\|_{H^{2s+1}}\|\partial_3 b\|_{H^{2s}}
+ \|\partial_3 u\|_{H^{2s}}\| b\|_{W^{s,\infty}}\|\partial_3 b\|_{H^{2s}})\\
\lesssim & (1+t)^{1-\sigma} (\|u\|_{H^{2s-1}}\|\partial_3 b\|_{H^{2s}}^2
+ \|u\|_{H^{2s}}\|\partial_3 b\|_{H^{2s-3}}\|\partial_3 b\|_{H^{2s}})\\
&+ (1+t)^{1-\sigma} (\|\partial_3 u\|_{H^{2s-1}}\| b\|_{H^{2s+1}}\|\partial_3 b\|_{H^{2s}}
+ \|\partial_3 u\|_{H^{2s}}\| b\|_{H^{2s}}\|\partial_3 b\|_{H^{2s}}),
\end{split}
\end{equation}
provided that $s\geq 5$. Now,
\begin{equation}\label{eqJ4}
\begin{split}
\int_{0}^{t} |J_4(\tau)| \; d\tau \lesssim &G_0(t) \int_{0}^{t}\|u\|_{H^{2s-1}} \; d\tau + E_0^{1/2}(t) G_0^{1/2}(t)
\int_{0}^{t}(1+\tau)^{1/2} \|\partial_3 b\|_{H^{2s-3}} \; d\tau\\
& + E_0^{1/2}(t) G_0^{1/2}(t)
\int_{0}^{t}(1+\tau)^{1/2} \|\partial_3 u\|_{H^{2s-1}} \; d\tau\\
&+ e_0^{1/2}(t)
\Big(\!  \int_{0}^{t}\!   (1+\tau)^{-\sigma}\|\partial_3 b\|_{H^{2s}}^2 \; d\tau\!  \Big)^{1/2}
\!  \Big(\!  \int_{0}^{t} \!  (1+\tau)^{2-\sigma}\|\partial_3 u\|_{H^{2s}}^2 \; d\tau\!  \Big)^{1/2}\\
\lesssim & G_0(t)E_1^{1/2}(t)+E_0^{1/2}(t)G_0^{1/2}(t)E_1^{1/2}(t)+ E_0^{1/2}(t)G_0^{1/4}(t)G_1^{1/4}(t)e_0^{1/2}(t).
\end{split}
\end{equation}
where, we have used the following inequality
\begin{equation}\label{eqlem2.2}
\begin{split}
\int_{0}^{t} (1+\tau)^{2-\sigma}\|\partial_3 u\|_{H^{2s}}^2 \; d\tau \lesssim &
\int_{0}^{t} (1+\tau)^{\frac{1-\sigma}{2}}\|\partial_3 u\|_{H^{2s+1}}  (1+\tau)^\frac{3-\sigma}{2}\|\partial_3 u\|_{H^{2s-1}}\; d\tau\\
\lesssim & G_0^{1/2}(t) G_1^{1/2}(t).
\end{split}
\end{equation}

Similarly, we can estimate $J_5$ as follows:
\begin{equation}\nonumber
\begin{split}
|J_5| \lesssim & (1+t)^{1-\sigma}( \|\partial_3 b\|_{W^{s,\infty}}\|u\|_{H^{2s+1}}\|\partial_3 b\|_{H^{2s}}
+ \|\partial_3 b\|_{H^{2s}}^2 \|u\|_{W^{s,\infty}})\\
&+(1+t)^{1-\sigma}(\|b\|_{W^{s,\infty}}\|\partial_3 u\|_{H^{2s}} \|\partial_3 b\|_{H^{2s}}
+ \|b\|_{H^{2s}}\|\partial_3 u\|_{W^{s,\infty}}\|\partial_3 b\|_{H^{2s}})\\
\lesssim & (1+t)^{1-\sigma}( \|\partial_3 b\|_{H^{2s-3}}\|u\|_{H^{2s+1}}\|\partial_3 b\|_{H^{2s}}
+ \|\partial_3 b\|_{H^{2s}}^2 \|u\|_{H^{2s-1}})\\
&+(1+t)^{1-\sigma}\|b\|_{H^{2s}}\|\partial_3 u\|_{H^{2s}} \|\partial_3 b\|_{H^{2s}},
\\
\end{split}
\end{equation}
provided that $s\geq 5$.
Hence, using \eqref{eqlem2.2}, we easily get
\begin{equation}\label{eqJ5}
\begin{split}
&\int_{0}^{t} |J_5(\tau)| \; d\tau \\
\lesssim  & E_0^{1/2}(t)G_0^{1/2}(t)\int_{0}^{t} (1+\tau)^{1/2} \|\partial_3 b\|_{H^{2s-3}} \; d\tau
+ G_0(t) \int_{0}^{t} \|u\|_{H^{2s-1}} \; d\tau\\
&\quad+e_0^{1/2}E_0^{1/2}\Big(\int_{0}^{t} (1+\tau)^{2-\sigma}\|\partial_3 u\|_{H^{2s}}^2 \; d\tau\Big)^{1/2}\\
 \lesssim & E_0^{1/2}(t)G_0^{1/2}(t)E_1^{1/2}(t)+G_0(t)E_1^{1/2}(t)+E_0^{1/2}(t)G_0^{1/4}(t)G_1^{1/4}(t)e_0^{1/2}(t).
\end{split}
\end{equation}

In the same manner, we can estimate the last term $J_6$. Indeed,
\begin{equation}\nonumber
\begin{split}
|J_6| \lesssim & (1+t)^{1-\sigma}(\|b\|_{W^{s,\infty}}\|\partial_3 b\|_{H^{2s}}\|\partial_3 u\|_{H^{2s}}
+ \|b\|_{H^{2s}}\|\partial_3 b\|_{W^{s,\infty}}\|\partial_3 u\|_{H^{2s}})\\
 &+ (1+t)^{1-\sigma} (\|\partial_3 b\|_{W^{s,\infty}}\|b\|_{H^{2s+1}}\|\partial_3 u\|_{H^{2s}}+
\|\partial_3 b\|_{H^{2s}} \|b\|_{W^{s,\infty}}\|\partial_3 u\|_{H^{2s}})\\
\lesssim & (1+t)^{1-\sigma}\big(\|b\|_{H^{2s}}\|\partial_3 b\|_{H^{2s}}\|\partial_3 u\|_{H^{2s}}+ \|\partial_3 b\|_{H^{2s-3}}\|b\|_{H^{2s+1}}\|\partial_3 u\|_{H^{2s}}\big),
\end{split}
\end{equation}
provided that $s\geq 5$. Then, it is clear that
\begin{equation}\label{eqJ6}
\begin{split}
\int_{0}^{t} |J_6(\tau)| \; d\tau \lesssim & e_0^{1/2}E_0^{1/2}\Big(\int_{0}^{t} (1+\tau)^{2-\sigma}\|\partial_3 u\|_{H^{2s}}^2 \; d\tau\Big)^{1/2}\\
&+ E_0^{1/2}(t)G_0^{1/2}(t)\int_{0}^{t} (1+\tau)^{1/2} \|\partial_3 b\|_{H^{2s-3}} \; d\tau\\
\lesssim & E_0^{1/2}(t)G_0^{1/4}(t)G_1^{1/4}(t)e_0^{1/2}(t)+  E_0^{1/2}(t)G_0^{1/2}(t)E_1^{1/2}(t).
\end{split}
\end{equation}

Finally, summing up the estimates for $J_1\thicksim J_6$, i.e., \eqref{eqJ1}, \eqref{eqJ2}, \eqref{eqJ3}, \eqref{eqJ4}, \eqref{eqJ5} and \eqref{eqJ6}, and integrating \eqref{eqlem2.1} in time, using Poincar$\mathrm{\acute{e}}$ inequality we can complete the proof of this lemma.
\end{proof}

\begin{lemma}\label{lem3}
Assume that $s\geq 4$ and the energies are defined as in \eqref{eqframe}, then we have
\begin{equation}\nonumber
G_1(t)\lesssim E_0(0) + G_0(t)+ E_0^{1/3}(t)E_1^{2/3}(t)
+  G_1(t)E_1^{1/2}(t)+G_1^{1/2}(t)E_1^{1/2}(t) e_0^{1/2}(t).
\end{equation}
\end{lemma}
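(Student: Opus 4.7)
The plan is to parallel the scheme of Lemma~\ref{lem2}. I apply $\nabla^{2s-2}\partial_3$ to both equations of \eqref{eq1.2}, take the $L^2$ inner product of the magnetic equation with $\nabla^{2s-2}\partial_3 b$ and of the momentum equation with $\nabla^{2s-2}\partial_3 u$, add them, and multiply by $(1+t)^{3-\sigma}$. This produces an identity of the form
\begin{equation*}
\tfrac{1}{2}\tfrac{d}{dt}(1+t)^{3-\sigma}\bigl(\|\partial_3 u\|_{\dot H^{2s-2}}^2+\|\partial_3 b\|_{\dot H^{2s-2}}^2\bigr)+(1+t)^{3-\sigma}\|\partial_3 u\|_{\dot H^{2s-1}}^2=\sum_{i=1}^{6}J_i^{*},
\end{equation*}
where the six terms $J_i^{*}$ have the same structure as $J_1$--$J_6$ in Lemma~\ref{lem2}: $J_1^{*}$ is the $(1+t)^{2-\sigma}$-weighted energy that arises from differentiating the time weight, $J_2^{*}$ collects the top-order transport and the linear $\partial_3$-coupling, and $J_3^{*}$--$J_6^{*}$ are the commutator contributions of $u\cdot\nabla u$, $u\cdot\nabla b$, $b\cdot\nabla u$ and $b\cdot\nabla b$.

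For $J_1^{*}$ I split it into the $\partial_3 u$ and $\partial_3 b$ pieces. Writing $(1+\tau)^{2-\sigma}=(1+\tau)^{(1-\sigma)/2}(1+\tau)^{(3-\sigma)/2}$ and using $\|\partial_3 u\|_{H^{2s-2}}\le\|\partial_3 u\|_{H^{2s+1}}$, Cauchy--Schwarz gives
\begin{equation*}
\int_0^t(1+\tau)^{2-\sigma}\|\partial_3 u\|_{H^{2s-2}}^2\,d\tau\lesssim G_0^{1/2}(t)G_1^{1/2}(t),
\end{equation*}
and Young's inequality absorbs a small multiple of $G_1(t)$ into the LHS, leaving a remainder $\lesssim G_0(t)$. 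For the $\partial_3 b$ piece, absent direct dissipation, I use the Gagliardo--Nirenberg interpolation $\|\partial_3 b\|_{H^{2s-2}}^2\lesssim\|\partial_3 b\|_{H^{2s-3}}^{4/3}\|\partial_3 b\|_{H^{2s}}^{2/3}$ and then Hölder with exponents $(3/2,3)$ matched against the weights $(1+\tau)^{3-\sigma}$ (in $E_1$) and $(1+\tau)^{-\sigma}$ (in $E_0$) to obtain $\int_0^t(1+\tau)^{2-\sigma}\|\partial_3 b\|_{H^{2s-2}}^2\,d\tau\lesssim E_0^{1/3}(t)E_1^{2/3}(t)$. The $J_2^{*}$ contribution vanishes: the transport terms cancel via integration by parts with $\nabla\cdot u=\nabla\cdot b=0$, and the linear coupling integrates to $\int\partial_3\bigl(\nabla^{2s-2}\partial_3 u\cdot\nabla^{2s-2}\partial_3 b\bigr)\,dx=0$ by $x_3$-periodicity.

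For $J_3^{*}$--$J_6^{*}$, I split each commutator sum at $k=s$, apply Hölder and Sobolev embedding (only $s\ge 4$ is needed so that $\|f\|_{W^{s,\infty}}\lesssim\|f\|_{H^{s+2}}\lesssim\|f\|_{H^{2s-1}}$), and whenever a factor $b_h$ appears without a $\partial_3$ I invoke Proposition~\ref{prop} --- legitimate because \eqref{eq1.3} makes $b_h$ zero-mean in $x_3$ --- to replace $\|b_h\|_{\cdot}$ by $\|\partial_3 b_h\|_{\cdot}$. The resulting bounds fall into two patterns. Terms dominated by $(1+\tau)^{3-\sigma}\|u\|_{H^{2s-1}}\bigl(\|\partial_3 u\|_{H^{2s-2}}^2+\|\partial_3 b\|_{H^{2s-2}}^2\bigr)$ yield $G_1(t)E_1^{1/2}(t)$ after using the sup bound $(1+\tau)^{3-\sigma}\|\partial_3 f\|_{H^{2s-2}}^2\le G_1(t)$ and
\begin{equation*}
\int_0^t\|u(\tau)\|_{H^{2s-1}}\,d\tau\le\Bigl(\int_0^t(1+\tau)^{3-\sigma}\|u\|_{H^{2s-1}}^2\,d\tau\Bigr)^{1/2}\Bigl(\int_0^t(1+\tau)^{-(3-\sigma)}\,d\tau\Bigr)^{1/2}\lesssim E_1^{1/2}(t),
\end{equation*}
the last integral being finite since $3-\sigma>1$. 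The second pattern carries a single factor $\|b\|_{H^{2s}}\le e_0^{1/2}(t)$; combining it with the identity $\int_0^t(1+\tau)^{2-\sigma}\|\partial_3 u\|_{H^{2s}}^2\,d\tau\lesssim G_0^{1/2}(t)G_1^{1/2}(t)$ already used in \eqref{eqlem2.2}, and with $G_1$'s dissipation in $\partial_3 u$, gives $G_1^{1/2}(t)E_1^{1/2}(t)e_0^{1/2}(t)$. Finally I integrate the identity from $0$ to $t$, bound the boundary term at $\tau=0$ by $E_0(0)$, invoke Poincaré together with \eqref{eq1.4} to promote semi-norms to full norms on the LHS, and absorb the small $G_1$-pieces to close. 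The main obstacle, exactly as in Lemma~\ref{lem1}, is the $b_h\cdot\nabla_h b$ component of $J_6^{*}$: it carries no free $\partial_3$ and would pair two undamped factors if handled naively, so Proposition~\ref{prop} is essential to trade one power of $b_h$ for one power of $\partial_3 b_h$ and close the bookkeeping.
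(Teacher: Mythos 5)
Your overall scheme --- applying $\nabla^{2s-2}\partial_3$, testing with $\nabla^{2s-2}\partial_3 b$ and $\nabla^{2s-2}\partial_3 u$, weighting by $(1+t)^{3-\sigma}$, killing the top-order transport/coupling block by integration by parts, and handling the weight term via the interpolation $\|\partial_3 b\|_{H^{2s-2}}^2\lesssim\|\partial_3 b\|_{H^{2s-3}}^{4/3}\|\partial_3 b\|_{H^{2s}}^{2/3}$ together with $\int_0^t\|u\|_{H^{2s-1}}\,d\tau\lesssim E_1^{1/2}(t)$ --- coincides with the paper's, and your treatment of $J_1^{*}$--$J_4^{*}$ is sound. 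The gap is in $J_5^{*}$ and $J_6^{*}$, the commutators whose coefficient is $b$ rather than $u$. For these, $\int_0^t\|b\|_{H^{2s-1}}\,d\tau$ is not finite (no decay is available for $b$ itself), so the ``take the sup of $G_1$ on the $\partial_3$-factors and integrate the coefficient'' pattern fails; you must instead pair the two $\partial_3$-factors against dissipation integrals carrying the weight $(1+\tau)^{3-\sigma}$. The only such integrals in \eqref{eqframe} are $\int(1+\tau)^{3-\sigma}\|\partial_3 u\|_{H^{2s-1}}^2$ (in $G_1$) and $\int(1+\tau)^{3-\sigma}\bigl(\|u\|_{H^{2s-1}}^2+\|\partial_3 b\|_{H^{2s-3}}^2\bigr)$ (in $E_1$); nothing controls $\int(1+\tau)^{3-\sigma}\|\partial_3 b\|_{H^{2s-2}}^2$. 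Yet $J_5^{*}$ has $\nabla^{2s-2}\partial_3 b$ as its test function, and the endpoint of $J_6^{*}$, $\int\nabla^{2s-2}\partial_3 b\cdot\nabla b\;\nabla^{2s-2}\partial_3 u\,dx$, carries $\partial_3 b$ at order $2s-2$. As written, your argument leaves exactly this uncontrolled quantity; and the identity \eqref{eqlem2.2} you invoke lives at weight $2-\sigma$ and involves only $\partial_3 u$, so it cannot manufacture the $E_1^{1/2}$ factor you claim in the output $G_1^{1/2}E_1^{1/2}e_0^{1/2}$.

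The missing device is a further integration by parts that shifts one spatial derivative so that $\partial_3 b$ appears only at order $2s-3$ while $\partial_3 u$ is pushed up to order $2s-1$: the paper rewrites $N_5$ as $-\int\nabla\bigl(\nabla^{k}b\cdot\nabla\nabla^{2s-2-k}\partial_3 u\bigr)\,\nabla^{2s-3}\partial_3 b\,dx$ and treats the two endpoint terms of $N_6$ by the same trick, arriving at the bound $(1+t)^{3-\sigma}\|b\|_{H^{2s}}\|\partial_3 b\|_{H^{2s-3}}\|\partial_3 u\|_{H^{2s-1}}$, whose time integral is $e_0^{1/2}(t)E_1^{1/2}(t)G_1^{1/2}(t)$ by Cauchy--Schwarz against the two available dissipation integrals. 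Without this step the bookkeeping does not close. Note also that you have misidentified the obstruction: every term of $J_6^{*}$ does contain a $\partial_3 b$ factor (the outer $\partial_3$ always lands somewhere), so the difficulty is the \emph{order} of that factor, not its absence, and Proposition \ref{prop} is in fact not needed anywhere in this lemma --- it is the derivative count, not the trade of $b_h$ for $\partial_3 b_h$, that saves the estimate here.
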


\begin{proof}
First, taking $\nabla^{2s-2}\partial_3$ derivative on the system \eqref{eq1.2}. Then, taking inner product with $\nabla^{2s-2} \partial_3 b$ for the first equation of system \eqref{eq1.2} and taking inner product with $\nabla^{2s-2} \partial_3 u$ for the second equation of system \eqref{eq1.2}. Summing them up and multiplying the time weight $(1+t)^{3-\sigma}$ we get
\begin{equation}\label{eqlem3.1}
\begin{split}
\frac{1}{2}\frac{d}{dt}(1+t)^{3-\sigma}(\|\partial_3 u\|_{\dot H^{2s-2}}^2+\|\partial_3 b\|_{\dot H^{2s-2}}^2)+(1+t)^{3-\sigma}\|\partial_3 u\|_{\dot H^{2s-1}}^2=  \sum_{i=1}^{6} N_i,
\end{split}
\end{equation}
where,
\allowdisplaybreaks[2]
\begin{align}
N_1=&\frac{3-\sigma}{2} (1+t)^{2-\sigma}(\|\partial_3 u\|_{\dot H^{2s-2}}^2+\|\partial_3 b\|_{\dot H^{2s-2}}^2),\nonumber\\
N_2=&-(1+t)^{3-\sigma}\int_{\mathbb{T}^3} u\cdot \nabla \nabla^{2s-2} \partial_3 u \; \nabla^{2s-2} \partial_3 u + u\cdot \nabla \nabla^{2s-2}
\partial_3 b \; \nabla^{2s-2} \partial_3 b \;dx\nonumber\\
&+(1+t)^{3-\sigma}\int_{\mathbb{T}^3} b\cdot \nabla \nabla^{2s-2} \partial_3 b \; \nabla^{2s-2} \partial_3 u + b\cdot \nabla \nabla^{2s-2} \partial_3 u \; \nabla^{2s-2} \partial_3 b \;dx\nonumber\\
&+(1+t)^{3-\sigma}\int_{\mathbb{T}^3} \nabla^{2s-2}\partial_3^2 u \; \nabla^{2s-2} \partial_3 b + \nabla^{2s-2}\partial_3^2 b \;
\nabla^{2s-2} \partial_3 u \;dx,\nonumber\\
N_3=&-(1+t)^{3-\sigma}\sum_{k=1}^{2s-2}\int_{\mathbb{T}^3} \nabla^{k}u\cdot \nabla \nabla^{2s-2-k} \partial_3 u\; \nabla^{2s-2} \partial_3 u \;dx\nonumber\\
&-(1+t)^{3-\sigma}\sum_{k=0}^{2s-2}\int_{\mathbb{T}^3} \nabla^{k}\partial_3 u\cdot \nabla \nabla^{2s-2-k}  u\; \nabla^{2s-2} \partial_3 u \;dx,\nonumber\\
N_4=& (1+t)^{3-\sigma} \sum_{k=0}^{2s-2}\int_{\mathbb{T}^3}\nabla^{k} \partial_3 b \cdot \nabla \nabla^{2s-2-k} u\; \nabla^{2s-2}\partial_3 b\; dx\nonumber\\
&-(1+t)^{3-\sigma} \sum_{k=1}^{2s-2} \int_{\mathbb{T}^3}\nabla^{k} u \cdot \nabla \nabla^{2s-2-k} \partial_3 b \; \nabla^{2s-2}\partial_3 b \; dx,\nonumber\\
N_5=& (1+t)^{3-\sigma} \sum_{k=1}^{2s-2}\int_{\mathbb{T}^3}\nabla^{k} b \cdot \nabla \nabla^{2s-2-k}  \partial_3u\; \nabla^{2s-2}\partial_3 b\; dx\nonumber\\
&-(1+t)^{3-\sigma} \sum_{k=0}^{2s-2} \int_{\mathbb{T}^3}\nabla^{k} \partial_3 u \cdot \nabla \nabla^{2s-2-k} b \; \nabla^{2s-2}\partial_3 b \; dx,\nonumber\\
N_6=&(1+t)^{3-\sigma}\sum_{k=1}^{2s-2}\int_{\mathbb{T}^3}\nabla^{k}b \cdot \nabla \nabla^{2s-2-k} \partial_3 b\; \nabla^{2s-2} \partial_3 u\;dx\nonumber\\
&+(1+t)^{3-\sigma}\sum_{k=0}^{2s-2}\int_{\mathbb{T}^3}\nabla^{k}\partial_3 b \cdot \nabla \nabla^{2s-2-k} b\; \nabla^{2s-2} \partial_3 u\;dx.\nonumber
\end{align}

 The first term $N_1$ can be bounded as follows

\begin{equation}\nonumber
\begin{split}
|N_1|\lesssim & (1+t)^{2-\sigma} (\|\partial_3 u\|_{H^{2s}}^2+ \|\partial_3 b\|_{H^{2s-2}}^2) \\
\lesssim & (1+t)^\frac{1-\sigma}{2}\|\partial_3 u\|_{H^{2s+1}}(1+t)^\frac{3-\sigma}{2}\|\partial_3 u\|_{H^{2s-1}}\\
&+ \big [(1+t)^{-\sigma/2}\|\partial_3 b\|_{H^{2s}}\big ]^{2/3}\big[(1+t)^\frac{3-\sigma}{2}\|\partial_3 b\|_{H^{2s-3}}\big]^{4/3},
\end{split}
\end{equation}
and thus
\begin{equation}\label{eqN1}
\int_{0}^{t} |N_1(\tau)| \; d\tau \lesssim G_0^{1/2}(t)G_1^{1/2}(t)+ E_0^{1/3}(t)E_1^{2/3}(t).
\end{equation}

Using integration by parts and divergence free condition, we find
\begin{equation}\label{eqN2}
N_2 = 0.
\end{equation}

For the  term $N_3$, thanks to H\"{o}lder inequality and Sobolev imbedding theorem, we have
\begin{equation}\label{eqN3}
\begin{split}
\int_{0}^{t} |N_3(\tau)| \; d\tau \lesssim &
\int_{0}^{t} (1+\tau)^{3-\sigma}\|\partial_3 u\|_{H^{2s-2}}^2 \|u\|_{H^{2s-1}} \; d\tau \\
\lesssim & G_1(t) \int_{0}^{t} \|u\|_{H^{2s-1}} \; d\tau \\
\lesssim & G_1(t) E_1^{1/2}(t),
\end{split}
\end{equation}
provided that $s\geq 3$.

Then, we turn to the term $N_4$. For each term in $N_4$, we divide it into two parts: $k\leq s-1$ and $k \geq s$. We treat these two cases respectively and estimate as follows:
\begin{equation}\nonumber
\begin{split}
|N_4| \lesssim & (1+t)^{3-\sigma}(\|\partial_3 b\|_{W^{s-1,\infty}}\|u\|_{H^{2s-1}}\|\partial_3 b\|_{H^{2s-2}}
+ \|\partial_3 b\|_{H^{2s-2}}\|u\|_{W^{s-1,\infty}}\|\partial_3 b\|_{H^{2s-2}})\\
&+(1+t)^{3-\sigma}(\|u\|_{W^{s-1,\infty}}\|\partial_3 b\|_{H^{2s-2}}^2 + \|\partial_3 b\|_{W^{s-1,\infty}}\|u\|_{H^{2s-2}}\|\partial_3 b\|_{H^{2s-2}})\\
\lesssim & (1+t)^{3-\sigma}\|u\|_{H^{2s-1}}\|\partial_3 b\|_{H^{2s-2}}^2,
\end{split}
\end{equation}
provided that $s\geq 3$. Indeed,
\begin{equation}\label{eqN4}
\begin{split}
\int_{0}^{t} |N_4(\tau)| \; d\tau \lesssim & G_1(t) \int_{0}^{t} \|u\|_{H^{2s-1}} \; d\tau\\
\lesssim & G_1(t)E_1^{1/2}(t).
\end{split}
\end{equation}

Also for the next term $N_5$, we divide each term into two parts: $k\leq s-1$ and $k \geq s$. Using H\"{o}lder inequality and Sobolev inequality respectively, we can bound
\begin{equation}\nonumber
\begin{split}
|N_5| \lesssim & (1+t)^{3-\sigma} |\sum_{k=1}^{2s-2}\int_{\mathbb{T}^3}\nabla( \nabla^{k} b \cdot \nabla \nabla^{2s-2-k}  \partial_3 u)\; \nabla^{2s-3}\partial_3 b\; dx|\\
&+(1+t)^{3-\sigma}| \sum_{k=0}^{2s-2} \int_{\mathbb{T}^3} \nabla(\nabla^{k} \partial_3 u \cdot \nabla \nabla^{2s-2-k} b) \; \nabla^{2s-3}\partial_3 b \; dx|\\
 \lesssim & (1+t)^{3-\sigma}( \|b\|_{W^{s,\infty}} \|\partial_3 u\|_{H^{2s-1}} \|\partial_3 b\|_{H^{2s-3}}
+ \|b\|_{H^{2s-1}}\|\partial_3 u\|_{W^{s,\infty}}\|\partial_3 b\|_{H^{2s-3}})\\
& +(1+t)^{3-\sigma}(\|\partial_3 u\|_{W^{s,\infty}}\|b\|_{H^{2s}}\|\partial_3 b\|_{H^{2s-3}}
+ \|\partial_3 u\|_{H^{2s-1}}\|b\|_{W^{s,\infty}} \|\partial_3 b\|_{H^{2s-3}})\\
\lesssim & (1+t)^{3-\sigma} \|\partial_3 u\|_{H^{2s-1}}\|\partial_3 b\|_{H^{2s-3}}\|b\|_{H^{2s}},
\end{split}
\end{equation}
provided that $s\geq 3$.
Hence,
\begin{equation}\label{eqN5}
\int_{0}^{t} |N_5(\tau)| \; d\tau \lesssim G_1^{1/2}(t)E_1^{1/2}(t) e_0^{1/2}(t).
\end{equation}

We divide the last term $N_6$ into two parts as follows

\begin{equation}\nonumber
\begin{split}
N_6=&-(1+t)^{3-\sigma}\big\{\sum_{k=2}^{2s-2}\int_{\mathbb{T}^3}\nabla^{k}b \cdot \nabla \nabla^{2s-2-k} \partial_3 b\; \nabla^{2s-2} \partial_3 u\;dx\\
&\qquad+\sum_{k=0}^{2s-3}\int_{\mathbb{T}^3}\nabla^{k}\partial_3 b \cdot \nabla \nabla^{2s-2-k} b\; \nabla^{2s-2} \partial_3 u\;dx\big\}\\
&-\!  (1+t)^{3-\sigma}\big\{\int_{\mathbb{T}^3}\!  \nabla b \cdot \nabla \nabla^{2s-3} \partial_3 b\; \nabla^{2s-2} \partial_3 u\;dx+\int_{\mathbb{T}^3}\!  \nabla^{2s-2}\partial_3 b \cdot \nabla b\; \nabla^{2s-2} \partial_3 u\;dx\big\}\\
\triangleq& N_{6,1} + N_{6,2}.
\end{split}
\end{equation}
For the first part $N_{6,1}$, using H\"{o}lder inequality and Sobolev inequality, we easily get
\begin{equation}\nonumber
|N_{6,1}| \lesssim (1+t)^{3-\sigma}\|b\|_{H^{2s-1}}\|\partial_3 b\|_{H^{2s-3}} \|\partial_3 u\|_{H^{2s-2}},
\end{equation}
provided that $s\geq 4$.
Then for the second part $N_{6,2}$, using integration by parts, we can bound
\begin{equation}\nonumber
\begin{split}
|N_{6,2}| \lesssim &(1+t)^{3-\sigma} \|b\|_{W^{2,\infty}}\|\partial_3 b\|_{H^{2s-3}}\|\partial_3 u\|_{H^{2s-1}}\\
\lesssim & (1+t)^{3-\sigma} \|b\|_{H^{2s}}\|\partial_3 b\|_{H^{2s-3}}\|\partial_3 u\|_{H^{2s-1}},
\end{split}
\end{equation}
provided that $s\geq 2$.
Combining the estimate of $N_{6,1}$ and $N_{6,2}$ together, we finally obtain
\begin{equation}\label{eqN6}
\int_{0}^{t} |N_6(\tau)| \; d\tau \lesssim G_1^{1/2}(t)E_1^{1/2}(t)e_0^{1/2}(t).
\end{equation}

Like the process in above lemmas, according to \eqref{eqN1}, \eqref{eqN2}, \eqref{eqN3}, \eqref{eqN4}, \eqref{eqN5} and \eqref{eqN6}, we complete the proof of this lemma.

\end{proof}

\begin{lemma}\label{lem4}
Assume that $s\geq 4$ and the energies are defined as in \eqref{eqframe}, then we have
\begin{equation}\nonumber
\begin{split}
E_1(t)\lesssim &E_0(t)+ G_0(t) + G_1(t)+E_1^{3/2}(t)+E_1(t)e_0^{1/2}(t)+G_1^{1/2}(t)E_1^{1/2}(t)e_0^{1/2}(t).
\end{split}
\end{equation}
\end{lemma}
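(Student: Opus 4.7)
The plan is to mirror the two-step strategy of Lemma \ref{lem1}, applied at the intermediate order $H^{2s-2}$ with the time weight $(1+t)^{3-\sigma}$ built into $E_1$.

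Step 1: apply $\nabla^{2s-2}$ to both equations of \eqref{eq1.2}, take $L^2$-inner products of the first with $\nabla^{2s-2} b$ and the second with $\nabla^{2s-2} u$, sum, and multiply by $(1+t)^{3-\sigma}$. The pure transport terms vanish by incompressibility, the quadratic Lorentz coupling cancels after one integration by parts, and the linear coupling $\int \nabla^{2s-2}\partial_3 u \cdot \nabla^{2s-2} b + \int \nabla^{2s-2}\partial_3 b \cdot \nabla^{2s-2} u$ cancels via integration by parts in $x_3$. Integrating in time leaves the viscous dissipation $\int_0^t (1+\tau)^{3-\sigma}\|u\|_{\dot H^{2s-1}}^2 d\tau$ on the left, balanced on the right by (i) the weight-derivative term $(3-\sigma)\int_0^t (1+\tau)^{2-\sigma}(\|u\|_{H^{2s-2}}^2 + \|b\|_{H^{2s-2}}^2) d\tau$ and (ii) Leibniz commutators from $u\cdot\nabla u$, $u\cdot\nabla b$, $b\cdot\nabla u$, $b\cdot\nabla b$.

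Step 2: recover the missing $\int_0^t (1+\tau)^{3-\sigma}\|\partial_3 b\|_{\dot H^{2s-3}}^2 d\tau$ piece of $E_1$. Apply $\nabla^{2s-3}$ to the momentum equation and pair with $\nabla^{2s-3}\partial_3 b$, weighted by $(1+t)^{3-\sigma}$. Following Step 2 of Lemma \ref{lem1}, integrate the $\partial_t u$ pairing by parts in time to expose a $\tfrac{d}{dt}$-boundary plus a companion weight derivative, then substitute $\nabla^{2s-3}\partial_t b = \nabla^{2s-3}(\partial_3 u + b\cdot\nabla u - u\cdot\nabla b)$ via the induction equation; the dissipative coupling $\int_0^t (1+\tau)^{3-\sigma}\|\partial_3 u\|_{\dot H^{2s-3}}^2 d\tau$ is controlled by $G_1(t)$ directly, while the weight-derivative boundary contribution $\int_0^t (1+\tau)^{2-\sigma}\int \nabla^{2s-3}u\cdot\nabla^{2s-3}\partial_3 b\, d\tau$ splits by Cauchy--Schwarz and Proposition \ref{prop} into a $G_0^{1/2}(t)E_1^{1/2}(t)$-type contribution producing, after Young, the $G_0(t)$ term plus an absorbable $\epsilon E_1(t)$. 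The nonlinear commutators from both steps are handled by the familiar Leibniz split into $k\leq s-1$ and $k\geq s$ branches with H\"older and Sobolev embedding ($s\geq 4$); the self-interaction $u\cdot\nabla u$ contributes $E_1^{3/2}(t)$ by distributing $(1+\tau)^{3-\sigma}$ between the $E_1$-sup factor of $\|u\|_{H^{2s-2}}$ and the $E_1$-integral of $\|u\|_{H^{2s-1}}^2$, while the mixed $u$-$b$ commutators generate $G_1^{1/2}(t)E_1^{1/2}(t)e_0^{1/2}(t)$; the most dangerous piece $b_h\cdot\nabla_h b$ of $b\cdot\nabla b$ is tamed using Proposition \ref{prop} (applicable since $b_h$ is odd in $x_3$ by \eqref{eq1.3}) to replace $\|b_h\|_{H^k}$ by $\|\partial_3 b_h\|_{H^k}$, injecting magnetic dissipation and yielding $E_1(t)e_0^{1/2}(t)$.

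The main obstacle is the weight-derivative term $(1+\tau)^{2-\sigma}(\|u\|_{H^{2s-2}}^2 + \|b\|_{H^{2s-2}}^2)$ from Step 1, since neither factor enjoys its own $H^{2s-2}$-dissipation. Use the interpolation $\|f\|_{H^{2s-2}}^2 \lesssim \|f\|_{H^{2s}}^{2/3}\|f\|_{H^{2s-3}}^{4/3}$ together with the matching weight split $(1+\tau)^{2-\sigma}=(1+\tau)^{-\sigma/3}(1+\tau)^{(3-\sigma)\cdot 2/3}$ and H\"older in time with exponents $(3,3/2)$, exactly as in the treatment of $N_1$ in Lemma \ref{lem3}. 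For $f=u$ this yields
\[
\int_0^t (1+\tau)^{2-\sigma}\|u\|_{H^{2s-2}}^2 d\tau \lesssim E_0^{1/3}(t) E_1^{2/3}(t),
\]
using the $E_0$-integral of $\|u\|_{H^{2s+2}}^2$ and the $E_1$-integral of $\|u\|_{H^{2s-1}}^2$. For $f=b$, first invoke Proposition \ref{prop} (together with incompressibility $\partial_3 b_3=-\nabla_h\cdot b_h$ to handle the $b_3$-component) to reduce $\|b\|_{H^{2s-2}}$ to $\|\partial_3 b\|$-quantities, then reach the same $E_0^{1/3}E_1^{2/3}$ bound via the $E_0$-integral of $\|\partial_3 b\|_{H^{2s}}^2$. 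A final Young's inequality $E_0^{1/3}E_1^{2/3}\leq \epsilon E_1 + C_\epsilon E_0$ absorbs the rogue $E_1$ piece into the left-hand side dissipation, alongside similar absorptions from the Step 2 cross terms, producing the claimed estimate.
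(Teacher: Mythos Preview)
Your Step 1 differs from the paper's in a way that creates a genuine gap. You pair \emph{both} equations of \eqref{eq1.2} with their respective unknowns at order $H^{2s-2}$, which indeed makes the linear coupling $\partial_3 u \leftrightarrow \partial_3 b$ cancel. But the price is that the weight--derivative term now contains
\[
\int_0^t (1+\tau)^{2-\sigma}\|b\|_{H^{2s-2}}^2\,d\tau,
\]
and your proposed control of this via Proposition~\ref{prop} plus incompressibility does not go through for the component $b_3$. Since $b_3$ is \emph{even} in $x_3$, its $x_3$-average is not zero and Proposition~\ref{prop} is inapplicable; the relation $\partial_3 b_3=-\nabla_h\cdot b_h$ says nothing about the $n_3=0$ Fourier modes of $b_3$ (for $n=(n_1,n_2,0)$ the divergence-free condition reads $n_1\hat b_1+n_2\hat b_2=0$ and places no constraint on $\hat b_3$). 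These zero-$x_3$-frequency modes of $b_3$ are only controlled through the uniform bound $e_0(t)$ or the growing $E_0$ norms, and plugging either into the time integral with weight $(1+\tau)^{2-\sigma}$ diverges. So the estimate $\|b\|_{H^{2s-2}}\lesssim\|\partial_3 b\|_{H^{2s-2}}$ is false in general, and the claimed $E_0^{1/3}E_1^{2/3}$ bound for the $b$-part of the weight derivative is unjustified.

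The paper circumvents this by testing \emph{only} the momentum equation against $\nabla^{2s-2}u$ in Step~1 (note that $E_1$ contains no $\sup$ of $\|b\|$, so there is no reason to bring in the induction equation here). The weight-derivative term $F_1$ then involves only $\|u\|_{H^{2s-2}}^2$, which genuinely interpolates between the $E_0$-integral of $\|u\|_{H^{2s+2}}^2$ and the $E_1$-integral of $\|u\|_{H^{2s-1}}^2$. The linear coupling no longer cancels and survives as the term $F_3=(1+t)^{3-\sigma}\int\nabla^{2s-2}\partial_3 b\cdot\nabla^{2s-2}u$; the paper estimates it by splitting into $(b_h,u_h)$ and $(b_3,u_3)$ pairs, integrating by parts, and applying Proposition~\ref{prop} to the odd functions $b_h$ and $u_3$, arriving at $\int|F_3|\lesssim G_1^{1/2}E_1^{1/2}$. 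Your Step~2 is essentially correct and matches the paper's, but Step~1 must be redone along these lines.
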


\begin{proof}

Like the proof in Lemma \ref{lem1}, we divide the proof into two steps. We first deal with $E_{1,1}(t)$ which defined as follows:
\begin{equation}\label{E11}
E_{1,1}(t):= \sup_{0\leq \tau \leq t} (1+\tau)^{3-\sigma} \|u(\tau)\|_{H^{2s-2}}^2
+ \int_{0}^{t}  (1+\tau)^{3-\sigma}\|u(\tau)\|_{H^{2s-1}}^2 \; d\tau.
\end{equation}
$\mathbf{Step\;\; 1}$

Applying $\nabla^{2s-2}$ on the second equation of system \eqref{eq1.2}. Then, taking inner product with $\nabla^{2s-2} u$ and  multiplying the time weight $(1+t)^{3-\sigma}$ we get
\begin{equation}\label{eqlem4.1}
\frac{1}{2}\frac{d}{dt}(1+t)^{3-\sigma} \|u\|_{\dot H^{2s-2}}^2 +(1+t)^{3-\sigma}\| u\|_{\dot H^{2s-1}}^2= F_1+F_2+F_3+F_4,
\end{equation}
where,
\begin{equation}
\begin{split}
F_1=&\frac{3-\sigma}{2} (1+t)^{2-\sigma}\| u\|_{\dot H^{2s-2}}^2, \\
F_2=&-(1+t)^{3-\sigma}\Big{(}\int_{\mathbb{T}^3} u\cdot \nabla \nabla^{2s-2}  u\nabla^{2s-2}  u \; dx+ \sum_{k=1}^{2s-2}\int_{\mathbb{T}^3}\nabla^{k} u \cdot \nabla \nabla^{2s-2-k}u\nabla^{2s-2}u \; dx \Big{)},\\
F_3=&(1+t)^{3-\sigma}\int_{\mathbb{T}^3}\nabla^{2s-2}\partial_3b  \nabla^{2s-2}  u \; dx,\\
F_4=&(1+t)^{3-\sigma} \int_{\mathbb{T}^3}\nabla^{2s-2}( b \cdot \nabla b)\nabla^{2s-2}u \;dx.\nonumber
\end{split}
\end{equation}

Similarly, we shall estimate each term on right hand side of \eqref{eqlem4.1}. First, for the term $F_1$,
by Gagliardo--Nirenberg interpolation inequality, we have
\begin{equation}\nonumber
\begin{split}
|F_1|\lesssim  &(1+t)^{2-\sigma} \| u\|_{H^{2s}}^2 \\
\lesssim &\big [(1+t)^{-\sigma}\|u\|_{H^{2s+2}}^2\big ]^{1/3}\big[(1+t)^{3-\sigma}\| u\|_{H^{2s-1}}^2\big]^{2/3}.
\end{split}
\end{equation}
Hence,
\begin{equation}\label{eqF1}
\int_{0}^{t} |F_1(\tau)| \; d\tau \lesssim  E_0^{1/3}(t)E_1^{2/3}(t).
\end{equation}

For the term $F_2$, integrating by parts and using the divergence free condition, we directly know the first part of $F_2$ equals $0$. Hence, by H\"{o}lder inequality and Sobolev imbedding theorem, we get
\begin{equation}\label{eqF2}
\begin{split}
  \int_{0}^{t} |F_2(\tau)| \; d\tau \lesssim &\int_0^t (1+\tau)^{3-\sigma}\|u\|_{W^{s-1,\infty}}\|u\|_{H^{2s-2}}^2 \; d\tau \\
   \lesssim &\sup_{0\leq \tau \leq t} (1+\tau)^{3-\sigma}\|u\|_{H^{2s-2}}^2 \int_0^t \|u\|_{H^{2s-1}}\; d\tau \\
   \lesssim & E_1^{3/2}(t).
   \end{split}
\end{equation}
provided that $s\geq 2$.

Next, we turn to the estimate of $F_3$ and $F_4$ which are the wildest terms, due to the bad behaviour of $b$. Thanks to the Proposition \ref{prop}, we can use the same strategy as the estimate of $I_6$ in Lemma \ref{lem1} to solve this problem.

For the term $F_3$, using integration by parts and Proposition \ref{prop}, we get
\begin{equation}\nonumber
\begin{split}
|F_3|\lesssim & (1+t)^{3-\sigma}|\int_{\mathbb{T}^3} \nabla^{2s-3}b_h\nabla^{2s-1}\partial_3u_h - \nabla^{2s-3}\partial_3b_3\nabla^{2s-1}u_3 \; dx| \\
\lesssim &(1+t)^{3-\sigma} \big{(} \|b_h\|_{H^{2s-3}}\|\partial_3u_h\|_{H^{2s-1}}+\|\partial_3b_3\|_{H^{2s-3}}\|u_3\|_{H^{2s-1}} \big{)}\\
\lesssim & (1+t)^{\frac{3-\sigma}{2}} \|\partial_3b\|_{H^{2s-3}} (1+t)^{\frac{3-\sigma}{2}} \|\partial_3u\|_{H^{2s-1}}.
\end{split}
\end{equation}
Hence,
\begin{equation}\label{eqF3}
\int_{0}^{t} |F_3(\tau)| \; d\tau \lesssim  G_1^{1/2}(t)E_1^{1/2}(t).
\end{equation}

Also, for the term $F_4$, using integration by parts and dividing the term into four parts, we have
\begin{equation}\nonumber
  \begin{split}
    F_4 = &- (1+t)^{3-\sigma} \int_{\mathbb{T}^3}\nabla^{2s-3}( b \cdot \nabla b)\nabla^{2s-1}u \;dx \\
   = &-(1+t)^{3-\sigma} \sum_{k=0}^{s-1}\int_{\mathbb{T}^3} \big{(} \nabla^kb_h\cdot\nabla_h\nabla^{2s-3-k}b +\nabla^kb_3\cdot\nabla_3\nabla^{2s-3-k}b \big{)} \nabla^{2s-1}u \;dx\\
    &-(1+t)^{3-\sigma} \sum_{k=s}^{2s-3}\int_{\mathbb{T}^3} \big{(} \nabla^kb_h\cdot\nabla_h\nabla^{2s-3-k}b +\nabla^kb_3\cdot\nabla_3\nabla^{2s-3-k}b \big{)} \nabla^{2s-1}u \;dx.
  \end{split}
\end{equation}
Using H\"{o}lder inequality, Sobolev imbedding theorem and Proposition \ref{prop}, we get
\begin{equation}\nonumber
\begin{split}
 |F_4| \lesssim & (1+t)^{3-\sigma} \big{(}\|b_h\|_{W^{s-1,\infty}}\|b\|_{H^{2s-2}}\|u\|_{H^{2s-1}}+\|b_3\|_{W^{s-1,\infty}}\|\partial_3 b\|_{H^{2s-3}}\|u\|_{H^{2s-1}}\\
  &+\|b_h\|_{H^{2s-3}}\|b\|_{W^{s-2,\infty}}\|u\|_{H^{2s-1}}+\|b_3\|_{H^{2s-3}}\|\partial_3b\|_{W^{s-3,\infty}}\|u\|_{H^{2s-1}} \big{)}\\
  \lesssim & (1+t)^{3-\sigma} \big{(} \|\partial_3b\|_{H^{s+1}}\|b\|_{H^{2s-2}}\|u\|_{H^{2s-1}}+\|b_3\|_{H^{s+1}}\|\partial_3b\|_{H^{2s-3}}\|u\|_{H^{2s-1}}\\
 & +\|\partial_3b\|_{H^{2s-3}}\|b\|_{H^{s}}\|u\|_{H^{2s-1}}+\|b_3\|_{H^{2s-3}}\|\partial_3b\|_{H^{s-1}}\|u\|_{H^{2s-1}})\\
 \lesssim & (1+t)^{3-\sigma} \|b\|_{H^{2s-1}} \|\partial_3 b\|_{H^{2s-3}} \|u\|_{H^{2s-1}},
\end{split}
\end{equation}
provided that $s\geq 4$. Hence,

\begin{equation}\label{eqF4}
\begin{split}
  &\int_{0}^{t} |F_4(\tau)| \; d\tau \\
  \lesssim & \sup_{0\leq \tau \leq t}\|b\|_{H^{2s-1}} \Big{(} \int_0^t (1+\tau)^{3-\sigma}\|\partial_3b\|_{H^{2s-3}}^2\; d\tau\Big{)}^{1/2}\Big{(} \int_0^t (1+\tau)^{3-\sigma}\|u\|_{H^{2s-1}}^2\; d\tau\Big{)}^{1/2} \\
  \lesssim & E_1(t)e_0^{1/2}(t).
\end{split}
\end{equation}

Summing up the estimates for $F_1 \sim F_4$, i.e., \eqref{eqF1}, \eqref{eqF2}, \eqref{eqF3} and \eqref{eqF4}, and integrating \eqref{eqlem4.1} in time, we can get the estimate of $E_{1,1}(t)$ which is defined in \eqref{E11}
\begin{equation}\label{eqE11}
E_{1,1}(t)\lesssim  E_1(0)+E_0(t)^{1/3}E_1^{2/3}(t)+G_1^{1/2}(t)E_1^{1/2}(t)+E_1^{3/2}(t)+E_1(t)e_0^{1/2}(t).
\end{equation}
Here, we have used the Poincar$\mathrm{\acute{e}}$ inequality to consider the highest order norms only.
$\mathbf{Step\;\; 2}$

Now, let us work for the remaining term in $E_1(t)$. Applying $\nabla^{2s-3}$ derivative on the second equation of system \eqref{eq1.2} and taking
inner product with $\nabla^{2s-3} \partial_3 b$, multiplying the time-weight $(1+t)^{3-\sigma}$ we get

\begin{equation}\label{eqlem4.2}
(1+t)^{3-\sigma} \|\partial_3 b\|_{\dot H^{2s-3}}^2 = F_5 + F_6 + F_7 + F_8,
\end{equation}
where
\begin{equation}\nonumber
\begin{split}
F_5 =& (1+t)^{3-\sigma} \int_{\mathbb{T}^3} \nabla^{2s-3}(u\cdot \nabla u)\nabla^{2s-3}\partial_3 b \; dx
- (1+t)^{3-\sigma} \int_{\mathbb{T}^3} \nabla^{2s-3}\Delta u \nabla^{2s-3}\partial_3 b \; dx,\\
F_6 =& \!  - \!  (1+t)^{3-\sigma} \sum_{k=0}^{s-1} \int_{\mathbb{T}^3} \!  \!  \nabla^{k}b_h\cdot \nabla_h \nabla^{2s-3-k} b \nabla^{2s-3}\partial_3 b
+ \nabla^{k}b_3\cdot \nabla_3 \nabla^{2s-3-k} b \nabla^{2s-3}\partial_3 b\;dx\\
&\!  -\!   (1+t)^{3-\sigma} \sum_{k=s}^{2s-3} \int_{\mathbb{T}^3}\!  \!  \!  \!   \nabla^{k}b_h\cdot \nabla_h \nabla^{2s-3-k} b \nabla^{2s-3}\partial_3 b
+ \!  \nabla^{k}b_3\cdot \nabla_3 \nabla^{2s-3-k} b  \nabla^{2s-3}\partial_3 b\;dx,\\
F_7 =& \frac{d}{dt}(1+t)^{3-\sigma} \int_{\mathbb{T}^3} \nabla^{2s-3} u  \nabla^{2s-3}\partial_3 b\; dx
-(3-\sigma )(1+t)^{2-\sigma} \int_{\mathbb{T}^3} \nabla^{2s-3} u \nabla^{2s-3}\partial_3 b\; dx,\\
F_8 =& (1+t)^{3-\sigma} \int_{\mathbb{T}^3} \nabla^{2s-3} \partial_3 u  \nabla^{2s-3}\partial_t b\; dx.
\end{split}
\end{equation}

Similar to the process in Step 1, we shall drive the estimate of each term on the right hand side of \eqref{eqlem4.2}.
First, using H\"{o}lder inequality and Sobolev imbedding theorem, we get
\begin{equation}\nonumber
|F_5| \lesssim (1+t)^{3-\sigma} \|u\|_{H^{2s-2}}\|u\|_{H^{s+2}}\|\partial_3 b\|_{H^{2s-3}}
+ (1+t)^{3-\sigma} \|u\|_{H^{2s-1}} \|\partial_3 b\|_{H^{2s-3}}.
\end{equation}
Hence, for $s\geq 3$,
\begin{equation}\label{eqF5}
\int_{0}^{t} |F_5(\tau)| \; d\tau \lesssim E_1^{3/2}(t) + E_{1,1}^{1/2}(t) \big[\int_{0}^{t}(1+\tau)^{3-\sigma}\|\partial_3 b\|_{H^{2s-3}}^2 \;d\tau \big]^{1/2}.
\end{equation}

Next, for the most wild term $F_6$, similar to the estimate of $I_6$ in Lemma \ref{lem1},  we use property \eqref{eq1.3} and Proposition \ref{prop}  to obtain
\begin{equation}\nonumber
\begin{split}
|F_6| \lesssim &(1+t)^{3-\sigma}(\|b_h\|_{W^{{s-1},\infty}} \|b\|_{H^{2s-2}} \|\partial_3 b\|_{H^{2s-3}}
+ \|b_3\|_{W^{{s-1},\infty}}\|\partial_3 b\|_{H^{2s-3}}^2)\\
&+ (1+t)^{3-\sigma}(\|b_h\|_{H^{2s-3}} \|b\|_{W^{{s-2},\infty}} \|\partial_3 b\|_{H^{2s-3}}
+ \|b_3\|_{H^{2s-3}}\|\partial_3 b\|_{W^{s-3,\infty}}\|\partial_3 b\|_{H^{2s-3}})\\
\lesssim &(1+t)^{3-\sigma}(\|\partial_3 b_h\|_{H^{s+1}} \|b\|_{H^{2s-2}} \|\partial_3 b\|_{H^{2s-3}}
+ \|b_3\|_{H^{s+1}}\|\partial_3 b\|_{H^{2s-3}}^2)\\
&+ (1+t)^{3-\sigma}(\|\partial_3 b_h\|_{H^{2s-3}} \|b\|_{H^{s}} \|\partial_3 b\|_{H^{2s-3}}
+ \|b_3\|_{H^{2s-3}}\|\partial_3 b\|_{H^{s-1}}\|\partial_3b\|_{H^{2s-3}})\\
\lesssim & (1+t)^{3-\sigma}\|\partial_3 b\|_{H^{2s-3}}^2 \|b\|_{H^{2s-2}},
\end{split}
\end{equation}
provided that $s\geq 4$.
Hence,
\begin{equation}\label{eqF6}
\int_{0}^{t} |F_6(\tau)|\; d\tau \lesssim E_1(t)e_0^{1/2}(t).
\end{equation}

And, for the term $F_7$,  by H\"{o}lder inequality, we can get
\begin{equation}\label{eqF7}
\begin{split}
\int_{0}^{t} |F_7 (\tau)| \; d\tau \lesssim & G_1^{1/2}(t)E_1^{1/2}(t)+\int_0^t (1+\tau)^\frac{1-\sigma}{2}\|u\|_{H^{2s-3}}(1+\tau)^\frac{3-\sigma}{2}\|\partial_3b\|_{H^{2s-3}} \; d\tau \\
\lesssim & G_1^{1/2}(t)E_1^{1/2}(t)+E_1(t)^{1/2} \Big{(} \int_0^t (1+\tau)^{1-\sigma}\|u\|_{H^{2s+1}}^2 \; d\tau \Big{)}^{1/2} \\
\lesssim & G_1^{1/2}(t)E_1^{1/2}(t)+E_0^{1/3}(t)E_1^{2/3}(t).
\end{split}
\end{equation}

For the last term $F_8$, using the first equation of system \eqref{eq1.2}, we can write
\begin{equation}\nonumber
F_8 =(1+t)^{3-\sigma}\int_{\mathbb{T}^3} \nabla^{2s-3}\partial_3 u \nabla^{2s-3}( \partial_3 u+ b\cdot \nabla u - u\cdot \nabla b)\;dx.
\end{equation}
By H\"{o}lder inequality and Sobolev imbedding theorem, we have
\begin{equation}\nonumber
|F_8| \lesssim   (1+t)^{3-\sigma} (\|\partial_3u\|_{H^{2s-3}}^2 + \|\partial_3u\|_{H^{2s-3}} \|b\|_{H^{2s-2}}\|u\|_{H^{2s-2}}),
\end{equation}
provided that $s\geq 3$.
Hence,
\begin{equation}\label{eqF8}
\int_{0}^{t} |F_8(\tau)| \; d\tau \lesssim E_{1,1}(t) + e_0(t)^{1/2}E_1(t)^{1/2}G_1(t)^{1/2}.
\end{equation}

Summing up the estimates for $F_5 \sim F_8$, i.e., \eqref{eqF5}, \eqref{eqF6}, \eqref{eqF7} and \eqref{eqF8}, and integrating \eqref{eqlem4.2} in time, using Young inequality, we easily get
\begin{equation}\label{eqE12}
\begin{split}
&\int_{0}^{t}(1+\tau)^{3-\sigma}\|\partial_3 b\|_{H^{2s-3}}^2 \; d\tau\\
 \lesssim &E_{1,1}(t)+ E_1^{3/2}(t)+E_1(t)e_0^{1/2}(t)+G_1^{1/2}(t)E_1^{1/2}(t)\\
 &+E_0^{1/3}(t)E_1^{2/3}(t)+G_1^{1/2}(t)E_1^{1/2}(t)e_0^{1/2}(t).
 \end{split}
\end{equation}

This gives the estimate for the last term in $E_1(t)$.
Now, multiplying \eqref{eqE11} by suitable large number and plus \eqref{eqE12}, using Young inequality, we complete the proof of this lemma.
\end{proof}

\begin{lemma}\label{lem5}
Assume that $s\geq 3$ and the energies are defined as in \eqref{eqframe}, then we have
\begin{equation}\nonumber
\begin{split}
e_0(t) \lesssim & E_0(0) + G_0(t) + G_1(t) + E_0^{1/6}(t)E_1^{1/3}(t)e_0(t)\\
&+ E_0^{1/2}(t)E_1^{1/2}(t)e_0^{1/2}(t)+ G_0^{1/4}(t)G_1^{1/4}(t)e_0(t).
\end{split}
\end{equation}
\end{lemma}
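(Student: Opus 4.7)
The plan is a plain unweighted $H^{2s}$-energy estimate applied directly to the magnetic field equation, matching the fact that $e_0(t)=\sup_\tau\|b(\tau)\|_{H^{2s}}^2$ carries no time weight. I would apply $\nabla^{2s}$ to the first equation of \eqref{eq1.2}, take the $L^2$ inner product with $\nabla^{2s}b$, and integrate in time. After taking the supremum over $[0,t]$ this gives
\begin{equation*}
e_0(t)\lesssim\|b_0\|_{H^{2s}}^2+\int_0^t\Big|\langle\nabla^{2s}(u\cdot\nabla b),\nabla^{2s}b\rangle\Big|\,d\tau+\int_0^t\Big|\langle\nabla^{2s}(b\cdot\nabla u),\nabla^{2s}b\rangle\Big|\,d\tau+\int_0^t\Big|\langle\nabla^{2s}\partial_3 u,\nabla^{2s}b\rangle\Big|\,d\tau,
\end{equation*}
with initial piece bounded by $E_0(0)$.

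For the nonlinear pieces, the leading transport term $\int u\cdot\nabla\nabla^{2s}b\cdot\nabla^{2s}b$ vanishes by $\nabla\cdot u=0$. In the remaining commutator and in the production term $b\cdot\nabla u$ I would split each sum at $k\approx s$ so the low-order factor sits in $L^\infty$ via Sobolev embedding; whenever a horizontal $b_h$ appears at low derivative order, I would invoke Proposition \ref{prop} together with the odd symmetry \eqref{eq1.3} to replace $\|b_h\|_{H^m}$ by $\|\partial_3 b_h\|_{H^m}$ and thereby unlock decay through $G_0$ or $G_1$. Combining this with the interpolation $\int_0^t(1+\tau)^{1-\sigma}\|u\|_{H^{2s+1}}^2\,d\tau\lesssim E_0^{2/3}(t)E_1^{1/3}(t)$ established in \eqref{eqI3.2}, together with H\"older in time, the resulting nonlinear contributions take exactly the forms $E_0^{1/6}E_1^{1/3}e_0$, $E_0^{1/2}E_1^{1/2}e_0^{1/2}$, and $G_0^{1/4}G_1^{1/4}e_0$ on the right-hand side of the claim.

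The main obstacle will be the linear source $\int_0^t\langle\nabla^{2s}\partial_3 u,\nabla^{2s}b\rangle\,d\tau$, which carries no nonlinear smallness and pairs against $\|b\|_{H^{2s}}$ that has no decay factor in $e_0$. I would estimate by Cauchy--Schwarz in space, $\int_0^t\|\partial_3 u\|_{H^{2s}}\|b\|_{H^{2s}}\,d\tau\le e_0^{1/2}(t)\int_0^t\|\partial_3 u(\tau)\|_{H^{2s}}\,d\tau$, and then show that the $\tau$-integral is finite and expressible through $G_0,G_1$. The order $H^{2s}$ for $\partial_3 u$ is not in our frame: $G_0$ controls $\partial_3 u$ at order $H^{2s+1}$ with weight $(1+\tau)^{1-\sigma}$, while $G_1$ controls it at order $H^{2s-1}$ with weight $(1+\tau)^{3-\sigma}$. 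Gagliardo--Nirenberg gives $\|\partial_3 u\|_{H^{2s}}^2\lesssim\|\partial_3 u\|_{H^{2s-1}}\|\partial_3 u\|_{H^{2s+1}}$, and factoring the weight as $(1+\tau)^{2-\sigma}=(1+\tau)^{(3-\sigma)/2}(1+\tau)^{(1-\sigma)/2}$ followed by Cauchy--Schwarz in $\tau$ yields $\int_0^t(1+\tau)^{2-\sigma}\|\partial_3 u\|_{H^{2s}}^2\,d\tau\lesssim G_0^{1/2}(t)G_1^{1/2}(t)$.

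Crucially, the hypothesis $\sigma<1$ forces $2-\sigma>1$, so the weight $(1+\tau)^{-(2-\sigma)}$ is integrable on $[0,\infty)$. A second Cauchy--Schwarz therefore gives $\int_0^t\|\partial_3 u\|_{H^{2s}}\,d\tau\lesssim G_0^{1/4}(t)G_1^{1/4}(t)$, so the source contributes at most $e_0^{1/2}(t)G_0^{1/4}(t)G_1^{1/4}(t)$, which by Young's inequality is at most $\tfrac12 e_0(t)+C(G_0(t)+G_1(t))$. Absorbing the $\tfrac12 e_0(t)$ into the left-hand side produces the $G_0(t)+G_1(t)$ terms of the statement. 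The conceptual point is that $e_0$ has no time decay at all, so every right-hand-side term must either carry a nonlinear smallness factor (handled via Proposition \ref{prop} and the symmetry) or must integrate against an integrable-in-time weight; it is the geometric interpolation between $G_0$ and $G_1$ which provides exactly that integrability for the otherwise problematic linear source.
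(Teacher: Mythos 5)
Your proposal is correct and follows essentially the same route as the paper: an unweighted $\dot H^{2s}$ estimate on the $b$-equation, with the leading transport term killed by $\nabla\cdot u=0$, Proposition \ref{prop} applied to the low-order $b_h$ factors, and the linear source $\langle\nabla^{2s}\partial_3 u,\nabla^{2s}b\rangle$ handled exactly as in the paper via the interpolation $\int_0^t(1+\tau)^{2-\sigma}\|\partial_3 u\|_{H^{2s}}^2\,d\tau\lesssim G_0^{1/2}G_1^{1/2}$, the integrability of $(1+\tau)^{-(2-\sigma)}$, and Young's inequality to absorb $e_0^{1/2}$. The only cosmetic slip is that for the commutator term $M_1$ the interpolation actually used is $\int_0^t\|u\|_{H^{2s+2}}^{1/3}\|u\|_{H^{2s-1}}^{2/3}\,d\tau\lesssim E_0^{1/6}E_1^{1/3}$ rather than the bound from \eqref{eqI3.2} you cite, but the resulting term $E_0^{1/6}E_1^{1/3}e_0$ is the same.
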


\begin{proof}
Taking $\nabla^{2s}$ derivative on the first equation of system \eqref{eq1.2}. Then, taking inner product with $\nabla^{2s} b$ , we get
\begin{equation}\label{eqe0}
\frac{1}{2}\frac{d}{dt}\|b\|_{\dot H^{2s}}^2 = M_1 + M_2 + M_3,
\end{equation}
where,
\begin{equation}\nonumber
\begin{split}
M_1 =&  \sum_{k=1}^{s} \int_{\mathbb{T}^3} ( \nabla^{k} b \cdot \nabla \nabla^{2s-k} u-\nabla^{k} u \cdot \nabla \nabla ^{2s-k} b )\; \nabla^{2s} b \; dx\\
&+\sum_{k=s+1}^{2s} \int_{\mathbb{T}^3} ( \nabla^{k} b \cdot \nabla \nabla^{2s-k} u-\nabla^{k} u \cdot \nabla \nabla ^{2s-k} b )\; \nabla^{2s} b \; dx,\\
M_2 =& \int_{\mathbb{T}^3} (b_h\cdot \nabla_h \nabla^{2s} u + b_3\cdot \nabla_3 \nabla^{2s} u )\nabla^{2s} b\; dx,\\
M_3 =& \int_{\mathbb{T}^3} \nabla^{2s}\partial_3 u\; \nabla^{2s} b\; dx.
\end{split}
\end{equation}
Now we will estimate each term on the right hand side of \eqref{eqe0} line by line.

First, by H\"{o}lder inequality and Sobolev imbedding theorem, we easily get
\begin{equation}\nonumber
\begin{split}
|M_1| \lesssim & \|b\|_{W^{s,\infty}}\|u\|_{H^{2s}}\|b\|_{H^{2s}}+ \|u\|_{W^{s,\infty}}\|b\|_{H^{2s}}^2\\
&+\|b\|_{H^{2s}}\|u\|_{W^{s,\infty}}\|b\|_{H^{2s}}+ \|u\|_{H^{2s}}\|b\|_{W^{s,\infty}}\|b\|_{H^{2s}}\\
\lesssim & \|u\|_{H^{2s}}\|b\|_{H^{2s}}^2,
\end{split}
\end{equation}
provided that $s\geq 2$. Hence, using Gagliardo--Nirenberg interpolation inequality and H\"{o}lder inequality, we can bound
\begin{equation}\label{eqM1}
\int_{0}^{t} |M_1(\tau)|\; d\tau \lesssim e_0(t) \int_{0}^{t}  \|u\|_{H^{2s+2}}^{1/3}\|u\|_{H^{2s-1}}^{2/3}\; d\tau
\lesssim E_0^{1/6}(t)E_1^{1/3}(t)e_0(t).
\end{equation}

For the next term $M_2$, using the same method as above, we directly obtain
\begin{equation}\nonumber
\begin{split}
|M_2| \lesssim &\|b_h\|_{L^{\infty}} \|u\|_{H^{2s+1}} \|b\|_{H^{2s}}+\|b_3\|_{L^\infty}\|\partial_3 u\|_{H^{2s}}\|b\|_{H^{2s}}\\
\lesssim & \| b_h\|_{H^{2s-3}} \|u\|_{H^{2s+1}} \|b\|_{H^{2s}}+ \|\partial_3 u\|_{H^{2s}}\|b\|_{H^{2s}}^2,
\end{split}
\end{equation}
provided that $s\geq 3$. According to the Proposition \ref{prop}, we can use the same strategy as the estimate of $I_6$ in Lemma \ref{lem1}, and obtain
\begin{equation}\nonumber
|M_2| \lesssim \|\partial_3 b_h\|_{H^{2s-3}} \|u\|_{H^{2s+1}} \|b\|_{H^{2s}}+ \|\partial_3 u\|_{H^{2s}}\|b\|_{H^{2s}}^2.
\end{equation}
Using \eqref{eqlem2.2} and H\"{o}lder inequality, we get
\begin{equation}\label{eqM2}
\begin{split}
\int_{0}^{t} |M_2(\tau)| \; d\tau \lesssim &
E_0^{1/2}(t)e_0^{1/2}(t)\int_0^{t} (1+\tau)^{\sigma/2}\|\partial_3 b\|_{H^{2s-3}} \; d\tau +
e_0(t) \int_0^{t} \|\partial_3 u\|_{H^{2s}} \; d\tau \\
\lesssim &
E_0^{1/2}(t)E_1^{1/2}(t)e_0^{1/2}(t)+G_0^{1/4}(t)G_1^{1/4}(t)e_0(t).
\end{split}
\end{equation}

For the last term $M_3$, also we have
\begin{equation}\label{eqM3}
\int_{0}^{t} |M_3(\tau)|\; d\tau \lesssim e_0^{1/2}(t)\cdot \int_{0}^{t} \|\partial_3 u\|_{H^{2s}}\; d\tau
\lesssim G_0^{1/4}(t)G_1^{1/4}(t)e_0^{1/2}(t).
\end{equation}

Combining \eqref{eqM1}, \eqref{eqM2} and \eqref{eqM3} together, we now complete the proof of this lemma by using Young inequality.
\end{proof}

\subsection{Proof of the Theorem \ref{thm1}}
Now, let us combine the above $a \ priori$ estimates of all the energies defined in \eqref{eqframe} together, and finally give the proof of Theorem \ref{thm1}. First, we define the total energy as follows:
$$E_{\text{total}}(t)=E_0(t) + G_0(t) + G_1(t) + E_1(t) + e_0(t). $$
Then, multiplying each inequality in the above five lemmas by  different suitable number, and summing them up, we can obtain the following inequality
\begin{equation}\label{eqEtotal1}
 E_{\text{total}}(t)\leq C_1 E_0(0) + C_1 E_{\text{total}}^{3/2}(t),
 \end{equation}
for some positive constant $C_1$.

According to the setting of initial data in Theorem \ref{thm1}, there exists a positive constant $C_2$ such that
$ E_{\text{total}} (0) + C_1 E_0(0)\leq C_2 \varepsilon $. Due to the local existence result which can be achieved through basic energy method, there exists a positive time $T$ such that
\begin{equation}\label{eqEtotal2}
 E_{\text{total}} (t) \leq 2 C_2\varepsilon , \quad  \forall \; t \in [0, T].
\end{equation}
Let $T^{*}$ be the largest possible time of $T$ for what \eqref{eqEtotal2} holds, then we only need to show $T^{*} = \infty$ while completing the proof of Theorem \ref{thm1}. Notice the estimate \eqref{eqEtotal1}, we can use
 a standard continuation argument to show that $T^{*} = \infty$ provided that $\epsilon$ is small enough.  We omit the details here. Hence, we finish the proof of Theorem \ref{thm1}.

\section*{Acknowledgement}

The authors are grateful to the careful reviewers for constructive comments.
The first author is partially supported by NSF under grant DMS-1516415.
The second author is supported by Key Laboratory of Mathematics for Nonlinear Sciences (Fudan University), Ministry of Education of China, Shanghai, Key Laboratory for Contemporary Applied Mathematics, School of Mathematical Sciences, Fudan University, NSFC under grant No.11421061, 973 Program (grant No.2013CB834100) and 111 project.

\subsection*{Conflict of interest}
The authors declared that they have no conflict of interest to this work.

\end{document}